\newtheorem{thm}{Theorem}
\newtheorem{prop}{Proposition}
\newtheorem{lem}[prop]{Lemma}
\theoremstyle{remark}
  \theoremstyle{plain}
  \newtheorem{beh}{}
  \newcommand{\behref}[1]{{\bfseries \ref{#1}}}
\numberwithin{equation}{section}
\newcommand{\Z}{\mathbb{Z}}
\newcommand{\Se}{\mathcal{S}}
\newcommand{\Oh}{\mathcal{O}}
\renewcommand{\Re}{\operatorname{Re}}
\renewcommand{\Im}{\operatorname{Im}}
\newcommand{\Arg}{\operatorname{Arg}}
\newcommand{\OEIS}[1]{#1}
\newcommand{\TODO}[1]%
{\par\fbox{\begin{minipage}{0.9\linewidth}\textbf{TODO:} #1\end{minipage}}\par}
\newcommand{\parentheses}[4][]%
{\ifthenelse{\equal{#1}{}}{\left#2}{\csname#1\endcsname#2}%
    {#4}\ifthenelse{\equal{#1}{}}{\right#3}{\csname#1\endcsname#3}}
\newcommand{\foperator}[1]{\ensuremath{%
    \mathop{{#1}\thinspace\negthinspace}
    \mathchoice{\negthinspace}{\negthinspace}{}{}}}
\newcommand{\f}[3][]{\ensuremath{\foperator{#2}\parentheses[#1]{(}{)}{#3}}}
\WithSuffix\newcommand{\f}*[2]{\ensuremath{
\newcommand{\abs}[2][]{\ensuremath{%
    \parentheses[#1]{\lvert}{\rvert}{#2}}}
\newcommand{\dni}[2][]{\ensuremath{%
    \parentheses[#1]{\lVert}{\rVert}{#2}}}  
\newcommand{\fexp}[2][]{\f[#1]{\exp}{#2}}
\newcommand{\flog}[2][]{\f[#1]{\log}{#2}}
\newcommand{\fOh}[2][]{\f[#1]{\Oh}{#2}}
  \newlength{\wwwoolen}
\begin{document}




\title[Multi-Base Representations of Integers]{Multi-Base Representations of
  Integers: Asymptotic Enumeration and Central Limit Theorems}


\author{Daniel Krenn}
\address{Daniel Krenn \\
Institute of Analysis and Computational Number Theory (Math A) \\
Graz University of Technology \\
Steyrergasse 30 \\
8010 Graz \\
Austria
}
\email{\href{mailto:math@danielkrenn.at}{math@danielkrenn.at} \textit{or}
  \href{mailto:krenn@math.tugraz.at}{krenn@math.tugraz.at}}

\thanks{Daniel Krenn is supported 
    by the Austrian Science Fund (FWF): P24644,
    by the Austrian Science Fund (FWF): F5510, which is part of the 
    Special Research Program "`Quasi-Monte Carlo Methods:
    Theory and Applications"',
    by the Austrian Science Fund (FWF): I1136, and
    by the Austrian Science Fund (FWF): W1230, 
    Doctoral Program ``Discrete Mathematics''.}


\author{Dimbinaina Ralaivaosaona}
\address{Dimbinaina Ralaivaosaona \\
Department of Mathematical Sciences \\
Stellenbosch University \\
Private Bag X1 \\
Matieland 7602 \\
South Africa
}
\email{\href{mailto:naina@sun.ac.za}{naina@sun.ac.za}}
\thanks{Dimbinaina Ralaivaosaona is supported by the Subcommittee B of Stellenbosch University, South Africa.
}

\author{Stephan Wagner}
\address{Stephan Wagner \\
Department of Mathematical Sciences \\
Stellenbosch University \\
Private Bag X1 \\
Matieland 7602 \\
South Africa
}
\email{\href{mailto:swagner@sun.ac.za}{swagner@sun.ac.za}}

\thanks{Stephan Wagner is supported by the National Research
  Foundation of South Africa under grant number 70560.}


\date{\today}


\subjclass[2010]{11A63; 05A16, 05A17, 68R05, 94A60}
\keywords{multi-base representations, asymptotic formula, partitions}


\begin{abstract}
  In a multi-base representation of an integer (in contrast to, for example,
  the binary or decimal representation) the base (or radix) is replaced by
  products of powers of single bases. The resulting numeral system has desirable properties for fast arithmetic.
 It is usually redundant, which means that each integer can have multiple different digit
  expansions, so the natural question for the number of representations arises. In this paper, we provide a general asymptotic formula for the number of
 such  multi-base representations of a positive integer $n$. Moreover, we prove   central limit theorems for the sum of digits, the Hamming weight (number of non-zero digits, which is a measure of efficiency) and the occurrences of a fixed digits in a  random representation.
\end{abstract}

\maketitle


\section{Introduction and Background}


A \emph{numeral system}\footnote{We use the term \emph{numeral system} rather
  than \emph{number system} as it is also called sometimes, since that name is
  ambiguous. For example, the system of $p$-adic numbers or the system of real
  numbers are called number systems.} (also called \emph{system of numeration})
is a way to represent numbers. The most common examples are, of course, the
ordinary decimal and binary systems, which represent numbers in base~$10$ and
$2$, respectively. Besides those ``standard'' systems, there is an immense
number of other numeral systems.

For fast arithmetic, the right choice of numeral system is an important
aspect. The algorithms we have in mind here are, for example, exponentiation in
a finite group and the scalar multiplication on elliptic curves. Both are used
in cryptography, and clearly we want to improve on the running time of those
algorithms (which are often based on a Horner scheme, cf.\@
Knuth~\cite{Knuth:1998:Art:2}).

Starting with the binary system, one can improve the performance of the
aforementioned algorithms by adding more digits than needed. Thus, we make
the numeral system \emph{redundant}, which means that each element can have a
lot of different representations. For instance, using digits $0$, $1$ and $-1$
can lead to a speed-up, cf.\@ Morain and Olivos~\cite{Morain-Olivos:1990} for
such a scalar multiplication algorithm on elliptic curves. To gain back the
uniqueness, additional syntax can complement the redundant system. In the
example using digits $0$, $1$ and $-1$, this can be the non-adjacent form, see
Reitwiesner's seminal paper~\cite{Reitwiesner:1960}. Generalizations in that
direction can be found in \cite{Blake-Seroussi-Smart:1999, Gordon:1998,
  Miyaji-Ono-Cohen:1997:effic, Solinas:2000:effic-koblit}.


A different way to get redundancy, and thereby a better running time of the
algorithms mentioned above, is to use double-base and multi-base numeral
systems. For example, we can represent a number by a finite sum of terms
$a_\ell\, 3^{\alpha_\ell} 7^{\beta_\ell} 11^{\gamma_\ell}$ for
some digits~$a_\ell$, which leads to a multi-base system with three bases. A
formal definition is given in the next section. Note that multiplication by one of the bases
(in the example: $3$, $7$ or $11$) is extremely simple for such representations, just like doubling is easy for binary
representations. This is a very desirable property for fast arithmetic.

Double-base numeral systems are used for cryptographic applications, see for
example \cite{Avanzi-Dimitrov-Doche-Sica:2006:double-base,
  Dimitrov-Imbert-Mishra:2008:double-base,
  Dimitrov-Jullien-Miller:1999:double-base}. The typical bases are~$2$
and~$3$. With these bases (and a digit set containing at least $0$ and $1$),
each positive integer has a double-base representation, cf.\@ Berth\'e and
Imbert~\cite{Berthe-Imbert:2009}. When using general bases, less is known on
the existence, cf.\@ Krenn, Thuswaldner and
Ziegler~\cite{Krenn-Thuswaldner-Ziegler:2013:belcher} for some results using
small symmetric digit sets. However, choosing the digit set large enough (so
that the numeral system with only one of the bases can already represent all
positive integers), existence can always be guaranteed. Thus, when each
positive integer has a multi-base representation, a natural further question arises---
and this is also the main question studied in this article: how many
representations does each integer have? Our Theorem~\ref{thm:asy-general} provides
an (asymptotic) answer to this question.

The question is also motivated by the cryptoanalysis of evaluation schemes
(e.g.\@ elliptic curve scalar multiplication): One can avoid side channel
attacks if the corresponding numeral system is very redundant, i.e., if each
element has many different representations. In addition to the number
of representations, other parameters, such as the (Hamming) weight or the sum of digits, are
of importance in this context and therefore studied here as well. The Hamming weight in particular
is a measure for the efficiency of a digit representation for fast arithmetic. We show here that the sum of digits
and the Hamming weight (as well as the number of occurrences of any fixed digit) of a typical representation
of $n$ is of order $(\log n)^m$, where $m$ is the number of bases.

Our paper is structured as follows. The following section provides more precise definitions and reviews
existing results on the number of representations (which are available in very special cases). This is followed (in
Section~\ref{sec:main-results}) by the precise statements of our main
results. These results also include, apart from the asymptotic enumeration of multi-base
representations, the analysis of the sum of digits, the (Hamming) weight and the number of occurrences of a fixed digit.
The remaining parts of this article (Sections~\ref{sec:generating-function} to~\ref{sec:parameters}) are devoted
to the proofs of all these results, which are based on generating functions and the  saddle-point method.
Section~\ref{sec:conclusion} concludes the paper.

An extended abstract of this paper was presented at the AofA 2014 conference in
Paris, see~\cite{Krenn-Ralaivaosaona-Wagner:2014:multi-base-asy}.


\section{Terminology and Existing Results}
\label{sec:existing}


In a \emph{multi-base representation of $n$} (or \emph{multi-base
  expansion}), a positive integer~$n$ is expressed as a finite sum
\begin{equation}\label{eq:def-multi-base-repr}
  n = \sum_{\ell=1}^L  a_\ell B_\ell, \tag{$\divideontimes$}
\end{equation}
such that the following holds.
\begin{itemize}
\item The $a_\ell$ (called \emph{digits}) are taken from a fixed finite
  \emph{digit set} $\mathcal{D}$. Here, we will be using the canonical digit set
  $\{0,1,\dots,d-1\}$ for some fixed integer $d \geq 2$, but in principle our methods work for other sets as well.
\item The $B_\ell$ are in increasing order (i.e., $B_1 < B_2 < \cdots < B_L$) and
  taken from the set
  \begin{equation*}
    \Se=\lbrace p_1^{\alpha_1}p_2^{\alpha_2}\dots p_m^{\alpha_m} : \alpha_i \in \mathbb{N}\cup \{0\} \rbrace.
  \end{equation*}
  The numbers $p_1$, \dots, $p_m$ are called the \emph{bases} (in our setting, these
  are fixed coprime integers greater than $1$). The sequence of all elements of $\Se$ in increasing order is sometimes called a \emph{Hardy--Littlewood--P\'olya-sequence}.
\end{itemize}

In the following, we will discuss the number of representations of $n$ in a
given multi-base system, which we denote by $P(n)$ (we suppress the dependence
on $p_1$, $p_2$, \dots, $p_m$ and $d$). Note that this number is finite, since
our digit set does not contain negative integers.


For redundant single-base representations a lot is known.
Reznick~\cite{Reznick:1990:some} presents results on certain partition
functions, which correspond to representations with non-negative digits; see
also Protasov~\cite{Protasov:2000:asy-partitions,
  Protasov:2004:probl-asy-partitions} for more recent results on the number of
representations $P(n)$. When negative digits are used as well (for example in
elliptic curve cryptography), there are usually infinitely many representations
of a number, so counting these does not make sense. In this case, expansions
with minimum number of non-zero digits are of interest, since they lead to
fast evaluation schemes. See Grabner and
Heuberger~\cite{Grabner-Heuberger:2006:Number-Optimal} for a result counting
minimal representations (one minimal representation is the non-adjacent form
mentioned above, cf.\@ also \cite{Heuberger-Krenn:2013:wnaf-analysis,
  Heuberger-Krenn:2013:wnafs-optimality, Reitwiesner:1960}).


Let us consider double-base systems in particular, and let us take bases~$2$ and~$p$, where $p>1$
is an odd integer, and digits $0$ and $1$. We can group terms involving the
same powers of $p$ and use the uniqueness of the binary expansion to show that
double-base representations with bases~$2$ and~$p$ are in bijection with
partitions into powers of~$p$, i.e., representations of the form
\begin{equation*}
  n = n_0 + n_1 p + n_2 p^2 + n_3 p^3 + \cdots
\end{equation*}
with (arbitrary) non-negative integers $n_\ell$. More generally,
the same is true for
double-base representations with bases $q$ and $p$ and digit set
$\{0,1,\ldots,q-1\}$. It seems that the first non-trivial approximation of
$P(n)$ in this special case is due to
Mahler~\cite{Mahler:1940:spec-functional-eq}. By studying Mordell's functional
equation, he obtained
\begin{equation*}
  \log P(pn) \sim (\log n)^2 / (2\log p).
\end{equation*}
The much more precise result 
\begin{align*}
  \log P(pn) &= \frac{1}{2\log p}\left(\log\frac{n}{\log n}\right)^2
  + \left( \frac12 + \frac{1}{\log p} + \frac{\log\log p}{\log p} \right)
  \log n \\
  &\mathrel{\phantom{=}}
  - \left( 1 + \frac{\log\log p}{\log p} \right) \log\log n
  + \fOh{1}
\end{align*}
was derived by Pennington~\cite{Pennington:1953:Mahler-part-prob}. The error
term in the previous asymptotic formula exhibits a periodic fluctuation. Note
that for bases $2$ and $p$, the function $P(n)$ fulfils the recurrence
relation
\begin{equation*}
 P(n) =
 \begin{cases}
   P(n-1) + P(n/p) &\text{if $p\mid n$,} \\
   P(n-1) &\text{otherwise,}
 \end{cases}
\end{equation*}
which has been known for a long time in conjunction with partitions of integers.

For further reference and more information see \OEIS{A005704} in the On-Line
Encyclopedia of Integer Sequences~\cite{OEIS:2015} and see also
\cite{Dimitrov-Imbert-Mishra:2008:double-base,
  Mishra-Dimitrov:2008:comb-double-base} for the connection to double-base
systems.


\section{Main Results}
\label{sec:main-results}


We present our main results now.
The aim of this work is to give an asymptotic formula in a more general set-up.
Throughout this paper, $d\geq 2$ and $m\geq 2$ are fixed integers, and $p_1$,
$p_2$, \dots, $p_m$ are integers such that $1< p_1< p_2< \dots <p_m$ and
$\gcd(p_i,p_j)=1$ for $i\neq j$. As our first main theorem, we prove an
asymptotic formula for the number of representations of $n$ of the
form~\eqref{eq:def-multi-base-repr}. It will be convenient to use the
abbreviation
\begin{equation*}
  \kappa=\frac{\log d}{m!}\prod_{j=1}^{m}\frac{1}{\log p_j}.
\end{equation*}

\begin{thm}\label{thm:asy-general}
  If $m\geq 3$, then the number $P(n)$ of distinct multi-base representations
  of $n$ of the form~\eqref{eq:def-multi-base-repr} satisfies the asymptotic
  formula
\begin{equation*}
  \log P(n)= C_0 (\log n)^m+C_1 (\log n)^{m-1}\log \log n+C_2 (\log n)^{m-1}
  +\fOh{(\log n)^{m-2}\log\log n}
\end{equation*}
for $n\to\infty$, where
\begin{align*}
  C_0 &= \kappa,\\
  C_1 &= -m(m-1) \kappa,\\
  C_2 &= \kappa m\Big(1+\tfrac{1}{2}\sum_{j=1}^{m} 
  \log p_j-\tfrac{1}{2}\log d -\log(\kappa m)\Big).
\end{align*}
\end{thm}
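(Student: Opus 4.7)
The plan is to proceed via generating functions and the saddle-point method applied to a Mellin--Barnes integral representation of $\log F(e^{-t})$, in the spirit of Meinardus' theorem and of Pennington's treatment of the case $m=2$. The starting point is the ordinary generating function
\[
F(z) = \sum_{n \geq 0} P(n)\, z^n = \prod_{B \in \Se} \frac{1 - z^{dB}}{1 - z^B},
\]
which encodes one digit $a_\ell \in \{0, 1, \dots, d-1\}$ for each $B \in \Se$. Set $z = e^{-t}$ and $\Phi(t) := \log F(e^{-t})$; the goal is to understand $\Phi(t)$ as $t \to 0^+$, because standard saddle-point asymptotics for the coefficients will then give $\log P(n) = \Phi(t_n) + n t_n + O(\log n)$, where $t_n$ is determined by $-\Phi'(t_n) = n$.

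To analyze $\Phi(t)$, I would combine the expansion $\log\frac{1-e^{-dx}}{1-e^{-x}} = \sum_{k \geq 1} \frac{e^{-kx} - e^{-dkx}}{k}$, the Dirichlet series identity $\sum_{B \in \Se} B^{-s} = \prod_{j=1}^{m}(1 - p_j^{-s})^{-1}$ (valid for $\Re s > 0$, using that the $p_j$ are coprime), and the Mellin representation $e^{-y} = \frac{1}{2\pi i}\int \Gamma(s)\, y^{-s}\, ds$ to obtain
\[
\Phi(t) = \frac{1}{2\pi i}\int_{c - i\infty}^{c + i\infty} \Gamma(s)\,\zeta(s+1)\,(1 - d^{-s})\, t^{-s} \prod_{j=1}^m \frac{1}{1 - p_j^{-s}}\, ds
\]
for some $c > 0$. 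The integrand has a pole of order $m$ at $s = 0$: order $m+1$ from $\zeta(s+1)\prod_j (1-p_j^{-s})^{-1}$, reduced by the simple zero of $1 - d^{-s}$. Shifting the contour past this pole, the main contribution is the residue, which I would compute by multiplying out the Laurent/Taylor expansions of the individual factors together with $t^{-s} = \sum_k (-s \log t)^k / k!$. Writing $L := \log(1/t)$, a short calculation gives
\[
\Phi(t) = \kappa L^m + \tfrac{m\kappa}{2}\Bigl(\sum_{j=1}^m \log p_j - \log d\Bigr) L^{m-1} + O(L^{m-2}),
\]
together with a corresponding differentiated estimate for $\Phi'(t)$.

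The saddle-point step then solves $-\Phi'(t_n) = n$ asymptotically: one finds $t_n \sim m\kappa (\log n)^{m-1}/n$ and hence $L_n = \log n - (m-1)\log\log n - \log(m\kappa) + o(1)$. Plugging $L_n$ back into $\Phi(t_n) + n t_n$ and carefully collecting contributions of sizes $(\log n)^m$, $(\log n)^{m-1}\log\log n$, and $(\log n)^{m-1}$ reproduces exactly the three constants $C_0$, $C_1$, $C_2$ claimed. Laplace's method applied to Cauchy's integral on the circle $|z|=e^{-t_n}$ then converts this into the stated estimate for $\log P(n)$, with the quadratic correction from $\Phi''(t_n)$ (of order $n/L_n$) contributing only $O(\log n)$ to the logarithm.

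The main obstacle is to show that all remaining contributions are of size $O((\log n)^{m-2}\log\log n)$. On the Mellin side, each factor $1 - p_j^{-s}$ vanishes at $s = 2\pi i k/\log p_j$ for $k\in\Z\setminus\{0\}$, generating periodic oscillations of the same type as those already visible in Pennington's double-base formula; controlling these requires uniform estimates of $\Gamma(s)\zeta(s+1)$ on vertical lines combined with bounds on $\prod_j(1-p_j^{-s})^{-1}$ there, the coprimality of the $p_j$ being used to rule out overlapping zeros and keep the product well-behaved. On the saddle-point side, one must propagate the error in $L_n$ through the powers $L_n^m$ and $L_n^{m-1}$ and check that the resulting error is indeed of the stated order---which is precisely why the condition $m \geq 3$ appears, since for $m=2$ the oscillatory contributions are of the same order as $(\log n)^{m-1}$ and cannot be absorbed into the error. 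The remainder is a careful but routine computation of the Laurent expansions and of the tail integrals.
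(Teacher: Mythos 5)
Your overall strategy---a Mellin analysis of $\log F(e^{-t})$ followed by a saddle-point evaluation of Cauchy's integral---is the same as the paper's, and your residue computation at $s=0$ together with the bookkeeping in $L_n=\log(1/t_n)$ does reproduce $C_0$, $C_1$, $C_2$ (note, though, that the pole at $s=0$ has order $m+1$, not $m$: you forgot the simple pole of $\Gamma(s)$, and your leading term $\kappa L^m$ is only consistent with order $m+1$). The genuine gap is that you dismiss the passage from $\Phi(t_n)+nt_n$ to $\log P(n)$ as ``Laplace's method applied to Cauchy's integral.'' The upper bound $P(n)\le e^{nt_n}F(e^{-t_n})$ is trivial, but the matching lower bound requires showing that $|F(e^{-t_n-i\tau})|$ is negligible compared with $F(e^{-t_n})$ for $\tau$ outside a tiny central arc, and for a product over the sparse multiplicative set $\Se$ this minor-arc estimate is the hardest part of the whole argument. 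The paper devotes Section~\ref{sec:tails} to it: one first shows $|F(z,u)|/F(|z|,u)\le\exp\bigl(-C\sum_{h\in\Se(r)}\dni{hy}^2\bigr)$ and then must bound $\sum_{h\in\Se(r)}\dni{hy}^2$ from below by a constant times $(\log 1/r)^{m-1}$ uniformly in $y$, which is done via Dirichlet approximation of $y$ by a rational $a/q$ and a pigeonhole argument exploiting the multiplicative structure of $\Se$ (Lemma~\ref{lem:tail2}). Nothing in your sketch addresses this, and it does not follow from the Mellin-side analysis.

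Two further corrections. First, the bound you need on $\prod_j(1-p_j^{-s})^{-1}$ on the line $\Re s=0$ is not merely a matter of the zeros at $s=2\pi i k/\log p_j$ being distinct: to sum the residues one needs $\prod_{r\ne j}|1-p_r^{-2\pi i k/\log p_j}|^{-1}$ to grow at most polynomially in $k$, which the paper obtains from Baker's theorem on linear forms in logarithms; coprimality alone gives no quantitative control near these zeros. Second, your explanation of the hypothesis $m\ge 3$ is wrong: the oscillatory contributions of those imaginary poles are $\fOh{1}$ for every $m$ and would be absorbed even into an $\fOh{\log\log n}$ error term. What actually fails for $m=2$ is that the Gaussian prefactor $(2\pi f_{tt}(t_n,1))^{-1/2}$ (together with similar secondary terms in the expansion) contributes $\Theta(\log n)$ to $\log P(n)$, which cannot be hidden in $\fOh{(\log n)^{m-2}\log\log n}=\fOh{\log\log n}$ when $m=2$; this is exactly the discrepancy between $C_2$ and $K_1=C_2-1$ in Theorem~\ref{thm:asy-2}.
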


In the case that there are precisely two bases, we have the following more
precise asymptotic result.

\begin{thm}\label{thm:asy-2}
  If $m = 2$, then the number $P(n)$ of distinct multi-base representations of
  $n$ of the form~\eqref{eq:def-multi-base-repr} satisfies the asymptotic
  formula
\begin{equation*}
  P(n) = K(n) (\log n)^{K_0}n^{K_1}
  \fexp{\kappa \log^2 \left( \frac{n}{\log n} \right)}
\end{equation*}
for $n\to\infty$, where $K(n)$ is a fluctuating function of $n$, that is,
bounded above and below by positive constants, and
\begin{align*}
  K_0 &= \tfrac{1}{2}+2\kappa\big(\log(2\kappa)
  -\tfrac{1}{2}(\log p_1 +\log p_2-\log d)\big),\\
  K_1 &= 2\kappa\big(1-\log(2\kappa)
  +\tfrac{1}{2}(\log p_1 +\log p_2-\log d)\big)-1.
\end{align*}
\end{thm}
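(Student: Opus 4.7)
The plan is to work with the generating function $F(z) = \sum_n P(n) z^n = \prod_{B\in\Se} \frac{1-z^{dB}}{1-z^B}$ and to extract $P(n) = [z^n] F(z)$ by the saddle-point method on the circle $|z| = e^{-\rho}$ for a suitably chosen $\rho = \rho(n) \to 0^+$. I would first develop a sufficiently precise asymptotic expansion of $\log F(e^{-s})$ and of its first two derivatives as $s \to 0^+$, then choose $\rho$ from the saddle-point equation $n = -\frac{d}{d\rho}\log F(e^{-\rho})$, and finally evaluate $\frac{1}{2\pi i}\oint F(z) z^{-n-1}\,dz$ by a Gaussian approximation near $\theta = 0$ together with a tail bound over the remainder of the circle.

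For the first step I would use a Mellin--Perron transform. Writing $\log F(e^{-s}) = \sum_{B\in\Se} h(sB)$ with $h(x) = \log\bigl((1-e^{-dx})/(1-e^{-x})\bigr)$, its Mellin transform factors as $h^*(z) = \Gamma(z)(1-d^{-z})\zeta(z+1)$, which has a simple pole at $z=0$ of residue $\log d$. The Dirichlet series $Z(z) = \sum_{B\in\Se} B^{-z} = \prod_{j=1}^{2}(1-p_j^{-z})^{-1}$ has a double pole at $z=0$ and simple poles on the imaginary axis at $z = 2\pi i k/\log p_j$ for $k\in\Z\setminus\{0\}$, $j=1,2$ (these are simple because $\log p_1/\log p_2$ is irrational, since $p_1$, $p_2$ are coprime integers greater than $1$). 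Shifting the Mellin inversion contour across $z=0$ and picking up residues yields
\begin{equation*}
\log F(e^{-s}) = \kappa \log^2(1/s) + a\log(1/s) + \Phi(\log(1/s)) + \fOh{s^{\delta}}
\end{equation*}
for some explicit constant $a$, some $\delta > 0$, and some bounded function $\Phi$ arising from the nonzero imaginary poles of $Z$; this latter function is precisely what will produce the fluctuating factor $K(n)$. The same contour argument applied to $-s\partial_s \log F(e^{-s})$ and $s^2\partial_s^2 \log F(e^{-s})$ delivers matching expansions of the logarithmic derivatives.

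Armed with these expansions, bootstrapping the saddle-point equation $n \sim 2\kappa\log(1/\rho)/\rho$ gives $\rho \sim 2\kappa\log n/n$, and more precisely $\log(1/\rho) = \log(n/\log n) - \log(2\kappa) + o(1)$. Substituting back into $\rho n + \log F(e^{-\rho}) - \tfrac{1}{2}\log(2\pi \sigma^2)$, where $\sigma^2 := \partial_\rho^2 \log F(e^{-\rho}) \asymp n^2/\log n$, and expanding carefully, one recovers
\begin{equation*}
\log P(n) = \log K(n) + K_0\log\log n + K_1\log n + \kappa\log^2(n/\log n) + o(1),
\end{equation*}
which is exactly the claimed formula. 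The $\tfrac{1}{2}$ in $K_0$ and the $-1$ in $K_1$ originate from the Gaussian prefactor $(2\pi\sigma^2)^{-1/2}$, while the remaining parts of $K_0$ and $K_1$ emerge from the constants in the Mellin expansion combined with the identity $\log(1/\rho) = \log(n/\log n) - \log(2\kappa) + o(1)$; the fluctuating factor $K(n)$ is, up to a bounded positive multiplicative constant, $\fexp{\Phi(\log(n/\log n))}$.

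The main obstacle will be the tail estimate on the saddle-point circle. Because $F$ has many zeros on the unit circle, it is not Hayman-admissible and one cannot quote a ready-made theorem. Instead I would exploit the product structure of $F$ directly, showing that for $|\theta|$ outside a small window around $0$ the individual quotients $|1-e^{(-\rho+i\theta)B}|/|1-e^{-\rho B}|$ are bounded away from $1$ on enough values of $B\in\Se$ simultaneously to give an estimate of the form $|F(e^{-\rho+i\theta})| \leq F(e^{-\rho})\fexp{-c\log^2(1/\rho)}$ for some $c>0$, which comfortably dominates any polynomial factor. Executing this tail estimate cleanly, and at the same time tracking every constant through the Mellin analysis so that $K_0$ and $K_1$ come out exactly in the stated closed form, is where the bulk of the work lies.
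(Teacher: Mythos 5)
Your overall strategy is the same as the paper's: Cauchy's formula on the circle $|z|=e^{-\rho}$, a Mellin analysis of $\log F(e^{-s})$ with the Dirichlet series $\prod_j(1-p_j^{-s})^{-1}$ producing the double pole at $0$ and the imaginary poles that give the fluctuating factor, the saddle-point equation $\rho\sim 2\kappa\log n/n$, and a Gaussian central approximation. However, your treatment of the tails contains a genuine gap, and it is exactly the point on which the whole $m=2$ case hinges.

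The bound you propose, $|F(e^{-\rho+i\theta})|\le F(e^{-\rho})\exp(-c\log^2(1/\rho))$ for all $\theta$ outside a small window around $0$, is false for two bases. Take $\theta/(2\pi)=1/p_1$, a point at fixed positive distance from $0$. For every $h\in\Se$ divisible by $p_1$ the quantity $h\theta/(2\pi)$ is an integer, so the corresponding factor of the product has modulus ratio exactly $1$; the only factors contributing a loss are those with $h=p_2^k$, of which there are only $O(\log(1/\rho))$ up to $1/\rho$. Hence at such points the ratio is at least $\rho^{c'}$ for some constant $c'>0$ depending on the bases, not $\exp(-c\log^2(1/\rho))$. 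Worse, the exponent $c'$ one can guarantee by elementary means may well be smaller than $1$, in which case integrating the pointwise bound $\rho^{c'}$ over an interval of length $O(1)$ does not beat the main term, which is of order $\rho(\log 1/\rho)^{-1/2}$ times $e^{n\rho+\log F(e^{-\rho})}$. The resolution (this is the paper's Lemma~\ref{lem:tail2}(c)) is measure-theoretic: one shows via Dirichlet approximation and a digit/Borel--Cantelli style count that the set of $\theta$ for which $\sum_{h\le 1/\rho,\,h\in\Se}\dni{h\theta/(2\pi)}^2\le K\log(1/\rho)$ has Lebesgue measure $O(\rho^{1-\delta})$, for $K$ as large as desired; the exceptional set is then handled with the weak uniform bound and its small measure, and the rest with $\rho^{CK}$, $CK>1$. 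Without an argument of this type your tail estimate does not close. A secondary, smaller issue: to conclude that the Fourier series $\Phi$ coming from the poles at $2\pi ik/\log p_j$ converges to a bounded function, you must control $|1-p_2^{-2\pi ik/\log p_1}|^{-1}$ from above; simplicity of the poles is not enough, and the paper invokes Baker's theorem on linear forms in logarithms to get a polynomial bound in $k$ that is beaten by the exponential decay of the Mellin transform.
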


Note that the first two terms of the asymptotic formula in
Theorem~\ref{thm:asy-general} coincide with those in Theorem~\ref{thm:asy-2}.

Moreover, we study the distribution of three natural parameters in random
multi-base representations, namely the sum of digits, i.e.\@ $a_1 + a_2 + \dots
+ a_L$ in the notation of~\eqref{eq:def-multi-base-repr}, the Hamming weight
(the number of non-zero coefficients $a_\ell$) and the number of occurrences of a fixed digit $b$. We get the following theorems.

\begin{thm}\label{thm:sum_of_digits}
  The sum of digits in a random multi-base representation of $n$ of the
  form~\eqref{eq:def-multi-base-repr} asymptotically follows a Gaussian
  distribution with mean and variance equal to
  \begin{equation*}
    \mu_n = \frac{\kappa(d-1)}{2 \log d} (\log n)^{m}
    +\fOh{(\log n)^{m-1}\log\log n}
  \end{equation*}
  and
  \begin{equation*}
    \sigma_n^2 = \frac{\kappa(d-1)(d+1)}{12 \log d}
    (\log n)^{m}+\fOh{(\log n)^{m-1}\log\log n}
  \end{equation*}
  respectively.
\end{thm}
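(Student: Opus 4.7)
The plan is to track the sum of digits via a bivariate generating function, redo the saddle-point analysis of the proof of Theorem~\ref{thm:asy-general} uniformly in the second variable, and then invoke Hwang's quasi-power theorem.

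Let $S_n$ denote the sum of digits of a uniformly random multi-base representation of $n$ of the form~\eqref{eq:def-multi-base-repr}. Recording a representation by the digit sequence $(a_B)_{B\in\Se}\in\{0,1,\dots,d-1\}^{\Se}$ with finitely many nonzero entries, the bivariate generating function factors as
\begin{equation*}
F(x,u)=\sum_{n\ge 0}P(n,u)\,x^n=\prod_{B\in\Se}\frac{1-u^{d}x^{dB}}{1-u\,x^{B}},
\end{equation*}
where $P(n,u)=\sum_{k}u^{k}\,\#\{\text{representations of }n\text{ with digit sum }k\}$, so that $P(n,1)=P(n)$ and $\mathbb{E}[e^{tS_n}]=P(n,e^{t})/P(n)$. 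One therefore needs asymptotics of $[x^{n}]F(x,u)$ that are uniform for $u$ in a small complex neighborhood of $1$.

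The idea is to repeat the saddle-point machinery from the proof of Theorem~\ref{thm:asy-general} while carrying $u$ through as an auxiliary parameter: the saddle point $\rho=\rho(n,u)$, determined by $\rho\,\partial_{\rho}\log F(\rho,u)=n$, lies close to the saddle point used for plain enumeration whenever $u$ is near $1$. Provided all Mellin-type expansions of $\log F(\rho,u)$ and the contour-integral error estimates extend with bounds uniform in $u$, one obtains an expansion
\begin{equation*}
\log[x^{n}]F(x,u)=A(u)(\log n)^{m}+B(u)(\log n)^{m-1}\log\log n+C(u)(\log n)^{m-1}+\fOh{(\log n)^{m-2}\log\log n},
\end{equation*}
with $A,B,C$ analytic near $u=1$ and $A(1)=C_{0}$, $B(1)=C_{1}$, $C(1)=C_{2}$. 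Subtracting $\log P(n)$ and setting $u=e^{t}$ gives
\begin{equation*}
\log\mathbb{E}\bigl[e^{tS_n}\bigr]=\bigl(A(e^{t})-A(1)\bigr)(\log n)^{m}+\fOh{(\log n)^{m-1}\log\log n},
\end{equation*}
which is exactly the quasi-power form required by Hwang's theorem with $\phi_n=(\log n)^{m}$. Hence $S_n$ is asymptotically Gaussian, with $\mu_n=A'(1)(\log n)^{m}+\text{error}$ and $\sigma_n^{2}=\bigl(A'(1)+A''(1)\bigr)(\log n)^{m}+\text{error}$.

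The constants $A'(1)$ and $A''(1)$ can be extracted from the perturbative dependence of $\rho(n,u)$ and $F(\rho,u)$ on $u$, but they are also pinned down by the following heuristic: for $B\in\Se$ in the effective range, the digit $a_B$ in a random representation is asymptotically uniform on $\{0,1,\dots,d-1\}$ (mean $(d-1)/2$, variance $(d^{2}-1)/12$), and the effective number of such $B$ is $\sim\kappa(\log n)^{m}/\log d$, since $|\{B\in\Se:B\le N\}|\sim(\log N)^{m}/(m!\prod_{j}\log p_{j})$. Multiplying yields exactly the stated values of $\mu_n$ and $\sigma_n^{2}$. The main technical obstacle is precisely this uniformity: one must verify that every estimate in the proof of Theorem~\ref{thm:asy-general} survives with bounds uniform for $u$ in a fixed complex neighborhood of $1$, so that $A(u)$ is analytic there and satisfies the non-degeneracy condition $A'(1)+A''(1)>0$. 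Once this is secured, Hwang's theorem simultaneously delivers the Gaussian limit law and the asymptotics of the first two moments.
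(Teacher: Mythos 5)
Your overall strategy---bivariate generating function, saddle-point analysis carrying $u$ along as a parameter, quasi-power theorem---is the same as the paper's, but there are two genuine gaps, and they sit exactly where the real work lies. First, you reduce everything to uniformity of the expansion for $u$ in a \emph{complex} neighbourhood of $1$, flag this as ``the main technical obstacle'', and then do nothing to address it. This is not a formality: the tail estimates (Lemma~\ref{lem:tail1}) and the Mellin analysis (Lemma~\ref{lem:asy_central}) are established only for real positive $u$ (the elementary inequalities in the proof of Lemma~\ref{lem:tail1} use positivity of $u$ essentially, and the contour rotation in the proof of Lemma~\ref{lem:asy_central} requires $u>0$ so that the argument of the logarithm never vanishes). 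The paper deliberately sidesteps complex $u$ by invoking the variant of the quasi-power theorem (Lemma~\ref{thm:quasi-power}) that only needs real $u$, and verifies its hypotheses by computing $R_n'(1)$, $R_n''(1)$ and $R_n'''(u)$ directly, via implicit differentiation of the saddle-point equation combined with the expansion~\eqref{eq:partial-derivatives}---not by differentiating an asymptotic expansion of $\log [z^n]F(z,u)$.

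Second, and relatedly, your claim that Hwang's theorem ``simultaneously delivers the Gaussian limit law and the asymptotics of the first two moments'' is where the argument actually breaks. Your expansion of $\log [x^n]F(x,u)$ has secondary terms $B(u)(\log n)^{m-1}\log\log n + C(u)(\log n)^{m-1}$ plus an error $\fOh{(\log n)^{m-2}\log\log n}$, all of which grow with $n$; this is not of the quasi-power form $\exp(\phi_n U(u)+V(u))(1+o(1))$ with fixed analytic $U$, $V$, and without complex analyticity you cannot differentiate the expansion (Cauchy's estimates are unavailable on a real interval), so $\mu_n = A'(1)(\log n)^m + \cdots$ and $\sigma_n^2=(A'(1)+A''(1))(\log n)^m+\cdots$ do not follow. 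The paper is explicit on this point: the limit law does not yield the moments, and a second, separate saddle-point analysis of $[z^n]F_u(z,1)$ and $[z^n]\bigl(F_{uu}(z,1)+F_u(z,1)\bigr)$ is carried out to obtain $\mu_n$ and $\sigma_n^2$. Your independence heuristic does land on the correct constants, but it is not a proof. To repair the argument you must either (i) establish the uniform complex estimates (hard, and not what the paper does), or (ii) follow the paper's route: the real-$u$ quasi-power theorem for the limit law, plus a direct saddle-point computation of the first two moments.
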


\begin{thm}\label{thm:hamming}
  The Hamming weight of a random multi-base representation of $n$ of the
  form~\eqref{eq:def-multi-base-repr} asymptotically follows a Gaussian
  distribution with mean and variance equal to
  \begin{equation*}
    \mu_n = \frac{\kappa(d-1)}{d \log d}
    (\log n)^{m}+\fOh{(\log n)^{m-1}\log\log n} 
  \end{equation*}
  and
  \begin{equation*}
    \sigma_n^2 = \frac{\kappa(d-1)}{d^2 \log d} (\log n)^{m}
    +\fOh{(\log n)^{m-1}\log\log n}
  \end{equation*}
  respectively.
\end{thm}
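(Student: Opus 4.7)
The plan is to adapt the saddle-point framework that yields Theorem~\ref{thm:asy-general} to a bivariate generating function that marks nonzero digits. Introduce
\[
F(z,u) = \prod_{B \in \Se} \Bigl(1 + u \sum_{k=1}^{d-1} z^{kB}\Bigr),
\]
so that $[z^n] F(z,u)$ is the generating polynomial of the Hamming weight taken over all multi-base representations of~$n$, and $F(z,1)$ coincides with the univariate series already analyzed. The strategy is to show that the probability generating function $\mathbb{E}[u^{H_n}] = [z^n]F(z,u)/[z^n]F(z,1)$ admits a quasi-power expansion in a complex neighborhood of $u=1$, and then to invoke Hwang's quasi-power theorem.

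First I would redo the saddle-point analysis carried out for $[z^n] F(z,1)$, with $u$ retained as an analytic parameter. For $u$ close to~$1$ the saddle point $\rho(u)$ is the positive solution of $z \partial_z \log F(z,u)=n$, and the harmonic sum
\[
\log F(e^{-s},u) = \sum_{B \in \Se} \log\Bigl(1 + u \sum_{k=1}^{d-1} e^{-kBs}\Bigr)
\]
is handled by the same Mellin-type estimate as for $u=1$: the summand depends analytically on~$u$ and behaves like $\log(1+u(d-1))+\fOh{s}$ as $s\to 0$, while the counting function of $\Se$ satisfies $|\{B\in\Se:B\le x\}|\sim (\log x)^m/(m!\prod_j \log p_j)$. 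This yields, uniformly for $u$ in a fixed neighborhood of~$1$,
\[
[z^n]F(z,u) = [z^n]F(z,1)\cdot \exp\bigl(A(u)\,(\log n)^m + \fOh{(\log n)^{m-1}\log\log n}\bigr),
\]
with $A$ analytic and $A(1)=0$. Hwang's theorem then delivers a Gaussian limit law with mean $A'(1)(\log n)^m$ and variance $(A'(1)+A''(1))(\log n)^m$ to leading order.

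For the explicit constants I would switch to the Boltzmann interpretation of the saddle point: at $(z,u)=(\rho,1)$ the coefficient extraction corresponds to conditioning the product measure $\mathbb{P}(a_B=j)\propto \rho^{jB}$ on $N := \sum_B B\, a_B = n$, under which the digits are independent. The probability that position $B$ is nonzero is
\[
\pi_B = 1 - \frac{1-\rho^B}{1-\rho^{dB}},
\]
so the unconditional mean and variance of $H = \sum_B \mathbf{1}_{a_B\ne 0}$ are $\sum_B \pi_B$ and $\sum_B \pi_B(1-\pi_B)$. From the proof of Theorem~\ref{thm:asy-general} the saddle point satisfies $1-\rho \sim \kappa m (\log n)^{m-1}/n$, hence $\log(1/(1-\rho)) = \log n - (m-1)\log\log n + \fOh{1}$. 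Approximating the sums by integrals against the counting density of $\Se$, the dominant contribution comes from $B \ll 1/(1-\rho)$, where $\pi_B\to (d-1)/d$; one obtains $\sum_B \pi_B \sim \tfrac{d-1}{d}\cdot \tfrac{(\log n)^m}{m!\prod_j \log p_j} = \tfrac{(d-1)\kappa}{d\log d}(\log n)^m$ and analogously $\sum_B\pi_B(1-\pi_B)\sim \tfrac{(d-1)\kappa}{d^2\log d}(\log n)^m$, matching the claimed leading terms.

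The main technical obstacle is the conditioning step, i.e., passing from the Boltzmann model (which only gives the unconditional statistics above) to a uniformly random representation of $n$. The standard remedy, which is already applied for Theorem~\ref{thm:sum_of_digits}, is to verify that the squared correlation $\operatorname{Cov}(H,N)^2/(\operatorname{Var}(H)\operatorname{Var}(N))$ is of lower order than~$1$ at the saddle point; the Gaussian correction to the mean and the variance induced by conditioning on $N=n$ is then absorbed into the $\fOh{(\log n)^{m-1}\log\log n}$ error term, which completes the proof.
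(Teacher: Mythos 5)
Your setup is the right one: the bivariate generating function you write down is exactly the paper's $G(z,u)$ from~\eqref{GF2}, and your Boltzmann-measure computation of the constants is consistent with the paper's $g_m(u)=\log(1+(d-1)u)\prod_j 1/\log p_j$, since $g_m'(1)/m!=\kappa(d-1)/(d\log d)$ and $(g_m'(1)+g_m''(1))/m!=\kappa(d-1)/(d^2\log d)$. The genuine gap is in the central limit step. You compress the expansion of $[z^n]G(z,u)/[z^n]G(z,1)$ to $\exp\big(A(u)(\log n)^m+\fOh{(\log n)^{m-1}\log\log n}\big)$ and then invoke Hwang's quasi-power theorem with centering $A'(1)(\log n)^m$ and scale $\sqrt{(A'(1)+A''(1))(\log n)^m}$. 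This does not work: the lumped error in the exponent is of order $(\log n)^{m-1}\log\log n$, which is \emph{larger} than the standard deviation $(\log n)^{m/2}$ for every $m\ge 2$, and its $u$-dependence is not controlled (Cauchy estimates only reproduce the same order for its $u$-derivatives). Hwang's theorem requires the non-main part of the exponent to be an $n$-independent analytic function up to an $o(1)$ multiplicative error, which is not the case here. Worse, the discarded terms genuinely matter for the centering: already rewriting $(\log 1/r(u))^m$ in powers of $\log n$ via~\eqref{eq:r-asmyp} produces $u$-dependent contributions of size $(\log n)^{m-1}\log\log n$, so the variable normalized by $A'(1)(\log n)^m$ does \emph{not} converge to a standard Gaussian. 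The paper circumvents this by never truncating: it keeps the full $n$- and $u$-dependent exponent $R_n(u)=n(r(u)-r_0)+f(r(u),u)-f(r_0,1)-\tfrac12\log f_{tt}(r(u),u)+\tfrac12\log f_{tt}(r_0,1)$ built from the $u$-dependent saddle point, and applies the variant criterion of Lemma~\ref{thm:quasi-power} (Flajolet--Sedgewick, Theorem~IX.13, for which real $u$ suffices), which only needs $R_n'(1)+R_n''(1)\to\infty$ and $R_n'''(u)=\fOh{(\log n)^m}=o\big((R_n'(1)+R_n''(1))^{3/2}\big)$; the saddle-point quantities $R_n'(1)$, $R_n''(1)$ themselves serve as centering and scaling. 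Note also that asking for uniformity in a \emph{complex} neighborhood of $u=1$, as you do, would require tail estimates (Lemma~\ref{lem:tail1}) for complex $u$, which the paper deliberately avoids.

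Your second step, transferring the Boltzmann mean and variance to the conditional law given $N=n$, is morally the same computation the paper performs, and the orders of magnitude you indicate are right: the conditioning correction to the variance is of size $\operatorname{Cov}(H,N)^2/\operatorname{Var}(N)\asymp(\log n)^{m-1}$, hence absorbable in the error term. But ``the squared correlation is $o(1)$'' is not by itself a proof that the conditional mean and variance equal the unconditional ones up to the stated error; you need a local-limit/saddle-point evaluation. The paper does exactly this by a second application of the saddle-point method, extracting $[z^n]G_u(z,1)/[z^n]G(z,1)$ and the analogous second-moment quotient and expanding around $r_0$ (this extra step is needed precisely because, as the paper remarks, weak convergence does not imply convergence of moments). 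So: your generating function and constants are correct, and your moment heuristic can be made rigorous along the paper's lines, but the CLT itself must be run with the exact saddle-point exponent $R_n(u)$ rather than with the truncated quasi-power form.
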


\begin{thm}\label{thm:digits}
  Let $b\in\{0,1,\dots,d-1\}$. The number of occurrences of the digit $b$ in a random
  multi-base representation of $n$ of the form~\eqref{eq:def-multi-base-repr}
  asymptotically follows a Gaussian distribution with mean and variance equal
  to
  \begin{equation*}
    \mu_n = \frac{\kappa}{d \log d}
    (\log n)^{m}+\fOh{(\log n)^{m-1}\log\log n} 
  \end{equation*}
  and
  \begin{equation*}
    \sigma_n^2 = \frac{\kappa(d-1)}{d^2 \log d} (\log n)^{m}
    +\fOh{(\log n)^{m-1}\log\log n}
  \end{equation*}
  respectively.
\end{thm}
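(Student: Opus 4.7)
The plan is to extend the generating function and saddle-point framework used to prove Theorem~\ref{thm:asy-general} to a bivariate setting that marks occurrences of the digit~$b$. First I introduce the bivariate generating function
\begin{equation*}
F_b(z,u) = \prod_{B \in \Se} \Bigl( \frac{z^{dB}-1}{z^B-1} + (u-1) z^{bB} \Bigr),
\end{equation*}
chosen so that $[z^n u^k] F_b(z,u)$ is the number of multi-base representations of $n$ in which exactly $k$ of the digits equal $b$. At $u=1$ this specializes to the univariate generating function $F(z)=\prod_{B\in\Se}(z^{dB}-1)/(z^B-1)$ whose coefficient asymptotics yield Theorem~\ref{thm:asy-general}.

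Next I would redo the saddle-point analysis of Sections~\ref{sec:generating-function}--\ref{sec:parameters} while treating $u$ as an additional complex parameter in a small fixed neighbourhood of~$1$. The positive real saddle $\rho_n(u)$ for $[z^n]F_b(z,u)$ depends analytically and only mildly on~$u$, and a uniform contour estimate gives
\begin{equation*}
[z^n] F_b(z,u) = A(n,u)\fexp{\Phi(n,u)}\bigl(1+\f{o}{1}\bigr)
\end{equation*}
uniformly for such $u$. Dividing by $[z^n]F_b(z,1)=P(n)$ gives the probability generating function $\Psi_n(u) = \mathbb{E}[u^{X_n}]$ of the random variable $X_n$ that counts occurrences of~$b$ in a uniformly random representation of $n$. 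Since $\log\Psi_n(u)$ is analytic near $u=1$ and scales in absolute value like $(\log n)^m$, Hwang's quasi-power theorem applies and delivers both the Gaussian limit law and the expressions for $\mu_n$ and $\sigma_n^2$ via the first two derivatives of $\log\Psi_n$ at $u=1$.

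For the explicit leading coefficients, observe that the contribution of a single base $B\in\Se$ to $\log F_b(\rho_n,u)$ is
\begin{equation*}
\log\Bigl(\frac{\rho_n^{dB}-1}{\rho_n^B-1}+(u-1)\rho_n^{bB}\Bigr).
\end{equation*}
A first differentiation in $u$ at $u=1$ gives $\rho_n^{bB}(\rho_n^B-1)/(\rho_n^{dB}-1)$, which in the relevant range where $\rho_n^B$ is close to $1$ equals $1/d$ up to negligible corrections. Summing over the $\kappa/\log d\cdot (\log n)^m$ effectively active bases (the order of magnitude already established in the proof of Theorem~\ref{thm:asy-general}) produces the leading coefficient $\kappa/(d\log d)$ of $\mu_n$. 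A second differentiation, combined with the identity $\operatorname{Var}(\mathbf{1}_{[a=b]})=(d-1)/d^2$ for a digit $a$ distributed uniformly on $\{0,\ldots,d-1\}$, analogously yields the leading coefficient $\kappa(d-1)/(d^2\log d)$ of $\sigma_n^2$. The error terms $\fOh{(\log n)^{m-1}\log\log n}$ are inherited from the saddle-point analysis in Theorem~\ref{thm:asy-general}.

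The hard part will be establishing uniformity of the saddle-point expansion in a complex neighbourhood of $u=1$ that is wide enough for Hwang's theorem, together with controlling the perturbation induced by the $(u-1)z^{bB}$ terms in the saddle location and in the second-derivative estimates of $\log F_b$ that bound the Gaussian width. Once that uniformity is in hand, the computation reduces to the same Mellin-style estimates used to prove Theorem~\ref{thm:asy-general}, merely applied to the $u$-differentiated integrand. The only difference from Theorems~\ref{thm:sum_of_digits} and~\ref{thm:hamming} is the local weight per position, which becomes $u^{[a=b]}$ rather than $u^a$ or $u^{[a\neq 0]}$, so the structural steps are identical.
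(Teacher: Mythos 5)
Your generating function is exactly the paper's $H_b(z,u)$ from~\eqref{GF3}, and the overall architecture (saddle-point analysis of the bivariate generating function plus a quasi-power argument) is the paper's as well. The genuine gap lies in how you propose to obtain uniformity and, with it, the moments. You want a saddle-point expansion of $[z^n]H_b(z,u)$ that is uniform for \emph{complex} $u$ in a neighbourhood of $1$, so that Hwang's quasi-power theorem delivers the Gaussian law and the asymptotics of $\mu_n$ and $\sigma_n^2$ in one stroke; you flag this uniformity as ``the hard part'' but do not supply it, and it is not a routine extension of the univariate analysis. The central expansion of Lemma~\ref{lem:asy_central} uses the positivity of $u$ to keep the argument of the logarithm away from $0$ when the Mellin integral is rotated, the tail estimates of Lemma~\ref{lem:tail1} compare $|H_b(z,u)|$ with $H_b(|z|,u)$ via inequalities whose parameter $a$ must be a positive real in $[\frac12,2]$, and for complex $u$ the saddle $r(u)$ defined by $n=-h_{b,t}(r,u)$ leaves the positive real axis. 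The paper deliberately sidesteps all of this: Lemma~\ref{lem:saddle} is proved only for real $u\in[\frac12,2]$, and the central limit theorem is deduced from the real-$u$ form of the quasi-power framework (Lemma~\ref{thm:quasi-power}, invoking the remark after Theorem~IX.13 of Flajolet--Sedgewick that real $u$ suffices for weak convergence).

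With only the real-$u$ statement in hand, your claim that $\mu_n$ and $\sigma_n^2$ follow ``via the first two derivatives of $\log\Psi_n$ at $u=1$'' does not go through: weak convergence does not imply convergence of moments, and the paper is explicit on this point. Its proof (written out for the sum of digits, Theorem~\ref{thm:sum_of_digits}, and declared analogous for $H_b$) obtains the mean and variance by a \emph{second} application of the saddle-point method, to $\frac{\partial}{\partial u}H_b(z,u)\big|_{u=1}$ and to $\frac{\partial^2}{\partial u^2}H_b(z,u)\big|_{u=1}+\frac{\partial}{\partial u}H_b(z,u)\big|_{u=1}$, expanding the $u$-derivative of the integrand around the real saddle $r_0$ and using the extended expansion~\eqref{eq:partial-derivatives}. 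So you must either genuinely establish the complex-$u$ uniform quasi-power expansion (which means reworking Lemmas~\ref{lem:asy_central} and~\ref{lem:tail1} for complex $u$) or add the separate moment computation; as written, the proposal contains neither. Your identification of the constants is consistent with the paper: the per-factor derivative $z^{bh}(1-z^h)/(1-z^{dh})\to 1/d$ and the variance weight $(d-1)/d^2$ correspond exactly to $h_{b,m}'(1)$ and $h_{b,m}'(1)+h_{b,m}''(1)$ for $h_{b,m}(u)=\log(d-1+u)\prod_{j=1}^m 1/\log p_j$, so the numerical targets are right once the analytic framework is actually in place; note also that these values must come from the rigorous generating-function computation, since the heuristic of independent uniform digits is only a guide.
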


The proofs of all these theorems are based on a saddle-point analysis of the
associated generating functions. As it turns out, the tail estimates are most
challenging, especially in the case $m=2$ (see Section~\ref{sec:tails}
for details). For the asymptotic analysis
of the various harmonic sums that occur, we apply the classical Mellin
transform technique, see
\cite{Flajolet-Gourdon-Dumas:1995:mellin}.


\section{The Generating Function}
\label{sec:generating-function}


We start with a generating function for our problem. As mentioned earlier, we define the set 
\begin{equation*}
  \Se=\lbrace p_1^{\alpha_1}p_2^{\alpha_2}\cdots p_m^{\alpha_m} 
  : \alpha_j \in \mathbb{N}\cup \{0\} \rbrace,
\end{equation*}
which is exactly the monoid that is freely generated by $p_1$, $p_2$, \dots,
$p_m$.  Note that the representations of $n$ correspond exactly to partitions
of $n$ into elements of $\Se$ where each term has multiplicity at most
$d-1$. The generating function for such partitions, where the first
variable~$z$ marks the size~$n$ and the second variable~$u$ marks the sum of
digits, can be written as
\begin{equation}\label{GF}
  F(z,u) = \prod_{h\in \Se} 
  \left( 1 + uz^h + u^2 z^{2h} + \dots + u^{d-1} z^{(d-1)h} \right) 
  = \prod_{h\in \Se}\frac{1-(u z^h)^d}{1-u z^h}.
\end{equation}

Likewise, we have the generating function
\begin{equation}\label{GF2}
  G(z,u)= \prod_{h\in \Se} 
  \left( 1 + uz^h + uz^{2h} + \dots + uz^{(d-1)h} \right) 
  = \prod_{h\in \Se} \left(1 + uz^h \frac{1-z^{(d-1)h}}{1-z^h} \right),
\end{equation}
where the second variable marks the Hamming weight (number of non-zero digits,
or equivalently number of distinct parts in a partition). For a digit
$b\in\{0,1,\dots,d-1\}$, whose occurrences will be marked by $u$, we use the generating
function
\begin{equation}\label{GF3}
  H_b(z,u)= \prod_{h\in \Se} 
  \left( 1 + z^h + \dots + uz^{bh} + \cdots + z^{(d-1)h} \right) 
  = \prod_{h\in \Se}\left(\frac{1-z^{dh}}{1-z^h} + (u-1) z^{bh}\right).
\end{equation}

Obviously, $F(z,1) = G(z,1) = H_b(z,1)$. We would like to apply the saddle-point
method to these generating functions. The trickiest part in this regard are the
rather technical tail estimates, especially when $m=2$, which will be discussed
in the next section. We will also need an asymptotic expansion in the central
region. To this end, we define the three functions
\begin{align*}
  f(t,u) = \log F(e^{-t},u) 
  &= \sum_{h \in \Se} 
  \flog[big]{1 + ue^{-ht} + u^2e^{-2ht} + \dots + u^{d-1}e^{-(d-1)ht}}, \\
  g(t,u) = \log G(e^{-t},u)
  &= \sum_{h \in \Se}
  \flog[big]{1 + ue^{-ht} + ue^{-2ht} + \dots + ue^{-(d-1)ht}}
  \intertext{and}
  h_b(t,u) = \log H_b(e^{-t},u)
  &= \sum_{h \in \Se}
  \flog[big]{1 + e^{-ht} + \dots + ue^{-bht} + \dots + e^{-(d-1)ht}}.
\end{align*}

\begin{lem}\label{lem:asy_central}
Suppose that $u$ lies in a fixed bounded positive interval around $1$, e.g. $u \in [1/2,2]$.
\begin{enumerate}
\item
For certain (real-)analytic functions $f_1(u)$, $f_2(u)$, \dots, $f_m(u)$ with 
\begin{equation*}
  f_m(u) = \log (1+u+\cdots+u^{d-1}) \prod_{j=1}^m\frac{1}{\log p_j},
\end{equation*}
we have the following asymptotic formula as $t \to 0^+$ ($t$ positive and
real), uniformly in $u$:
\begin{equation*}
  f(t,u) = \frac{f_m(u)}{m!} (\log 1/t)^m 
  + \frac{f_{m-1}(u)}{(m-1)!} (\log 1/t)^{m-1} + \dots 
  + f_1(u) (\log 1/t) + \fOh{1}.
\end{equation*}
Moreover,
\begin{equation*}
  \frac{\partial}{\partial t} f(t,u) = 
  - \frac{f_m(u)}{(m-1)!\, t} (\log 1/t)^{m-1} + \fOh{t^{-1} (\log 1/t)^{m-2} }
\end{equation*}
and
\begin{equation*}
  \frac{\partial^2}{\partial t^2} f(t,u) 
  = \frac{f_m(u)}{(m-1)!\, t^2} (\log 1/t)^{m-1} + \fOh{t^{-2} (\log 1/t)^{m-2} }.
\end{equation*}
Finally, there exists an $\eta > 0$ such that for complex $t$ with $\abs{\Im t} \leq \eta$, we have
\begin{equation*}
  \frac{\partial^3}{\partial t^3} f(t,u) = \fOh{(\Re t)^{-3} (\log 1/(\Re t))^{m-1} }
\end{equation*}
as $\Re t \to 0^+$, again uniformly in $u$.

\item Likewise, there exist functions $g_1(u)$, $g_2(u)$, \dots, $g_m(u)$ such
  that
  \begin{equation*}
    g(t,u) = \frac{g_m(u)}{m!} (\log 1/t)^m 
    + \frac{g_{m-1}(u)}{(m-1)!} (\log 1/t)^{m-1} + \dots 
    + g_1(u) (\log 1/t) + \fOh{1},
  \end{equation*}
  and the same conditions as in (1) hold with
  \begin{equation*}
    g_m(u) = \log (1+(d-1)u) \prod_{j=1}^m\frac{1}{\log p_j}.
  \end{equation*}

\item Moreover, for each digit $b\in\{0,1,\dots,d-1\}$, there exist functions
  $h_{b,1}(u)$, $h_{b,2}(u)$, \dots, $h_{b,m}(u)$ such that
  \begin{equation*}
    h_b(t,u) = \frac{h_{b,m}(u)}{m!} (\log 1/t)^m 
    + \frac{h_{b,{m-1}}(u)}{(m-1)!} (\log 1/t)^{m-1} + \dots 
    + h_{b,1}(u) (\log 1/t) + \fOh{1},
  \end{equation*}
  and the same conditions as in (1) hold with
  \begin{equation*}
    h_{b,m}(u) = \log (d-1+u) \prod_{j=1}^m\frac{1}{\log p_j}.
  \end{equation*}
\end{enumerate}
\end{lem}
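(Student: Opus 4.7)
The plan is to apply the Mellin transform technique for harmonic sums, following the Flajolet--Gourdon--Dumas framework \cite{Flajolet-Gourdon-Dumas:1995:mellin} already cited by the authors. For part~(1), I would first rewrite $f(t,u) = \sum_{h \in \Se} \phi(ht, u)$ with the amplitude $\phi(x, u) = \log(1 + ue^{-x} + \cdots + u^{d-1} e^{-(d-1)x})$. The Mellin inversion formula then gives, for any $c > 0$,
$$f(t, u) = \frac{1}{2\pi i} \int_{c-i\infty}^{c+i\infty} \phi^*(s, u)\, \zeta_\Se(s)\, t^{-s}\,ds,$$
where $\phi^*(s, u) = \int_0^\infty \phi(x, u)\, x^{s-1}\,dx$ is the Mellin transform of the amplitude and $\zeta_\Se(s) = \sum_{h \in \Se} h^{-s}$ is the Dirichlet series of the base set. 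Since $\Se$ is the monoid freely generated by $p_1, \ldots, p_m$, this Dirichlet series factorises as $\zeta_\Se(s) = \prod_{j=1}^m (1 - p_j^{-s})^{-1}$, with a pole of order exactly $m$ at $s = 0$ and leading Laurent coefficient $s^{-m}\prod_{j=1}^m(\log p_j)^{-1}$. Since $\phi(x,u) \to \phi(0, u) = \log(1 + u + \cdots + u^{d-1})$ as $x \to 0^+$ and $\phi$ decays exponentially as $x \to \infty$, $\phi^*(\cdot, u)$ admits a meromorphic continuation to $\Re s > -1$ with a simple pole at $s = 0$ of residue $\phi(0, u)$.

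Next I would shift the contour past $s = 0$ to a line $\Re s = -c'$ with $c' \in (0, 1)$ small, crossing only the singularity at the origin. There the integrand has a pole of order $m + 1$; expanding $t^{-s} = \sum_{k \geq 0} (s \log(1/t))^k / k!$ and multiplying by the Laurent expansions of $\phi^*$ and $\zeta_\Se$, the residue produces a polynomial of degree $m$ in $\log(1/t)$ whose coefficients are real-analytic in $u$, which I identify with $f_1(u), \ldots, f_m(u)$. The top coefficient evaluates to $\phi(0, u) / (m! \prod_{j=1}^m \log p_j) = f_m(u)/m!$, as required, and the shifted remainder contributes $\fOh{t^{c'}} = \fOh{1}$. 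The expressions for $\partial f / \partial t$ and $\partial^2 f / \partial t^2$ are obtained by differentiating under the Mellin integral, which introduces the multiplicative factors $-s\, t^{-s-1}$ and $s(s+1)\, t^{-s-2}$; multiplication by $s$ or $s(s+1)$ drops the pole order at $s = 0$ by one, so the residue yields a polynomial of degree $m - 1$ in $\log(1/t)$ with exactly the claimed leading behaviour.

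For the complex-$t$ bound on the third derivative, I would note that $\phi(x, u)$ extends analytically in $x$ to a thin horizontal strip around $\mathbb{R}_{\geq 0}$ (the polynomial $1 + \sum u^k e^{-kx}$ is bounded away from zero under small imaginary perturbations once $u>0$), so the series $\sum_h \phi(ht, u)$ converges absolutely for $|\Im t| \leq \eta$ and $\Re t > 0$ sufficiently small; differentiating the Mellin representation three times (gaining the factor $-s(s+1)(s+2)\,t^{-s-3}$) and rerunning the residue analysis produces the stated bound $\fOh{(\Re t)^{-3}(\log 1/(\Re t))^{m-1}}$. Parts~(2) and~(3) follow by the identical strategy with $\phi$ replaced by $\log(1 + u\sum_{k=1}^{d-1} e^{-kx})$ and $\log(\sum_{k=0}^{d-1} e^{-kx} + (u-1) e^{-bx})$, whose values at $x = 0$ are $\log(1 + (d-1)u)$ and $\log(d-1+u)$, matching the claimed $g_m(u)$ and $h_{b, m}(u)$. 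The hardest part will be establishing the uniform bounds (in $u \in [1/2,2]$ and in $|\Im s|$) on the Mellin integrand along the shifted line $\Re s = -c'$ that justify the contour shift and make the remainder estimate rigorous; this relies on the exponential decay of $\Gamma(s)$ on vertical lines (inherited by $\phi^*$ via the decomposition $\phi^*(s, u) = \phi(0, u)\Gamma(s) + \tilde\phi^*(s, u)$ with $\tilde\phi^*$ regular at $s = 0$) combined with polynomial bounds on $\zeta_\Se(s)$ on the shifted line, whose nearest singularities (on the imaginary axis) remain at distance $\geq c'$.
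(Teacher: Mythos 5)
Your overall strategy (Mellin transform of the amplitude times the Dirichlet series $D(s)=\prod_{j=1}^m(1-p_j^{-s})^{-1}$, contour shift, residue at the origin giving the polynomial in $\log(1/t)$) is the same as the paper's, and your identification of the leading coefficients and of the derivative asymptotics via the extra factors $-s$, $s(s+1)$ is sound. However, there is a genuine gap at the contour shift: you claim that moving from $\Re s = c>0$ to $\Re s = -c'$ crosses \emph{only} the singularity at the origin. That is false: $D(s)$ has infinitely many simple poles on the line $\Re s = 0$, at $s = 2\pi i k/\log p_j$ for all $k\in\Z\setminus\{0\}$ and $1\le j\le m$, and all of them are crossed. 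Their residues contribute a sum of $m$ Fourier series in $\log t$ (periodic fluctuations), and justifying that this contribution is $\fOh{1}$ --- i.e.\ that the series converge, uniformly in $u$ --- is the real crux of the paper's argument. It requires two inputs you do not supply: (i) exponential (or at least super-polynomial) decay of the amplitude transform $\phi^*(\sigma+i\tau,u)$ as $\abs{\tau}\to\infty$, uniformly in $u$, which the paper obtains by rotating the path of the Mellin integral to a ray $\Arg t=\epsilon$ (your decomposition $\phi^*=\phi(0,u)\Gamma+\tilde\phi^*$ gives exponential decay only for the $\Gamma$ part, and you never argue decay of $\tilde\phi^*$); and (ii) a lower bound on $\abs{1-p_r^{-2\pi i k/\log p_j}}$ for $r\neq j$, which the paper gets from Baker's theorem on linear forms in logarithms and which shows the residues grow at most polynomially in $k$, so that the exponential decay of $\phi^*$ wins. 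Without these, the $\fOh{1}$ error term (and likewise the error terms in the derivative expansions, where the same imaginary-axis residues appear multiplied by $t^{-1}$, $t^{-2}$) is not established; noting that the shifted line stays at distance $c'$ from those singularities does not address the residues you pick up.

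A second, lesser issue concerns the third-derivative bound for complex $t$ with $\abs{\Im t}\le\eta$. Rerunning the residue analysis on the thrice-differentiated Mellin representation is delicate there, because as $\Re t\to 0^+$ with $\Im t$ fixed the argument of $t$ approaches $\pm\pi/2$, and $\abs{t^{-s}}$ grows like $e^{\abs{\Im s}\,\abs{\Arg t}}$ along vertical lines, which the (small-$\epsilon$) exponential decay of the amplitude transform need not beat. The paper avoids this by choosing $\eta$ so small that the denominator $1+ue^{-ht}+\dots+u^{d-1}e^{-(d-1)ht}$ stays away from $0$, bounding the resulting ratio uniformly, and reducing to the real harmonic sum $\sum_{h\in\Se}h^3e^{-h\Re t}$, whose Mellin transform $\Gamma(s)D(s-3)$ has its dominant pole (of order $m$) at $s=3$; you state the analytic-continuation-to-a-strip observation but do not use it this way, so your complex-$t$ estimate as written is not justified.
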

 
\begin{proof}
  To prove the first part, we apply the
  classical Mellin transform technique to deal with the harmonic sums, see the
  paper of Flajolet, Gourdon and
  Dumas~\cite{Flajolet-Gourdon-Dumas:1995:mellin}. Consider first the
  Mellin transform
  \begin{equation*}
    Y(s,u) = \int_0^{\infty} \flog[big]{1 + ue^{-t} + u^2e^{-2t} 
      + \cdots + u^{d-1}e^{-(d-1)t}} t^{s-1}\,dt.
  \end{equation*}
  Integration by parts allows us to provide a meromorphic continuation
  (cf.\ Hwang~\cite{Hwang:2001:limit_theorems_number}). We have
  \begin{equation*}
    Y(s,u) = \frac{1}{s} \int_0^{\infty} t^s \frac{ue^{-t}+2u^2 e^{-2t} 
      + \dots + (d-1)u^{d-1}e^{-(d-1)t}}{1+ue^{-t} + \dots + u^{d-1}e^{-(d-1)t}}\,dt,
  \end{equation*}
  which exhibits the pole at $0$ with residue $\flog{1+u+u^2+\cdots+u^{d-1}}$,
  i.e., we have
  \begin{equation*}
    Y(s,u) \sim s^{-1}\log (1+u+\cdots+u^{d-1})
  \end{equation*}
  as $s \to 0$. By repeating this process one obtains a meromorphic
  continuation with further poles at $-1,-2,\ldots$.

  Moreover, since the integrand in the definition of $Y(s,u)$ decays
  exponentially as $\Re t \to \infty$, we can change the path of integration to
  the ray consisting of all complex numbers $t$ with $\Arg t = \epsilon > 0$,
  where $\epsilon$ is chosen small enough so that there is no $t$ with $\Arg t
  \leq \epsilon$ for which the expression inside the logarithm vanishes (this
  is possible since $u$ was assumed to be positive, so there are no real values
  of $t$ for which this happens). Set $\beta = e^{i \epsilon}$, and perform the
  change of variables $t = \beta v$ to obtain
  \begin{equation*}
    Y(s,u) = \beta^s \int_0^{\infty} 
    \flog[big]{1 + ue^{-\beta v} + u^2e^{-2\beta v} + \cdots +
      u^{d-1}e^{-(d-1)\beta v}}
    v^{s-1}\,dv.
  \end{equation*}
  If now $s = \sigma + i \tau$ with $\sigma > 0$, then the integral is
  uniformly bounded in $\tau$ for fixed $\sigma$, while the factor $\beta^s =
  e^{i \epsilon \sigma - \epsilon \tau}$ decays exponentially as $\tau \to
  \infty$. The same can be done for $\sigma = 0$ and for negative values of
  $\sigma$ (after suitable integration by parts) as well as negative $\tau$ (by
  symmetry). Therefore, we have
  \begin{equation*}
    Y(\sigma+ i\tau,u) = \fOh[big]{e^{-\epsilon\abs{\tau}}}
  \end{equation*}
  as $\tau \to \infty$, uniformly in $u$.

  Second, let us consider the Dirichlet series associated with the set $\Se$,
  i.e., $D(s) = \sum_{h \in \Se} h^{-s}$. It can be written as a product of
  elementary functions
  \begin{equation}\label{eq:mellin-sum}
    D(s) = \sum_{h \in \Se} h^{-s} = \prod_{j=1}^m\frac{1}{1-p_j^{-s}}.
  \end{equation}
  Each factor $1/(1-p_j^{-s})$ has a simple pole at $0$ and its singular
  expansion there is given by $1/(1-p_j^{-s}) \sim 1/(s \log p_j)$ as $s\to0$.

  Next we consider the Mellin transform of $f(t,u)$, which is given by $\f{Y}{s,u}
  \f{D}{s}$. This function has a pole of order $m+1$ at $s=0$, so the
  Laurent series of $Y(s,u)D(s)$ has the form
  \begin{equation*}
    \frac{f_m(u)}{s^{m+1}} + \frac{f_{m-1}(u)}{s^m} + \cdots 
    + \frac{f_1(u)}{s^2}+ \frac{f_0(u)}{s} + \cdots,
  \end{equation*}
  with $f_m(u)$ as indicated in the statement of our lemma. The other
  coefficients can be expressed in terms of certain improper
  integrals. Applying the Mellin inversion formula, we get
  \begin{equation*}
    f(t,u) = \frac{1}{2\pi i} \int_{c-i \infty}^{c+i \infty} Y(s,u)D(s) t^{-s}\,ds
  \end{equation*}
  for any $c > 0$. Following Flajolet, Gourdon and Dumas
  \cite[Theorem 4]{Flajolet-Gourdon-Dumas:1995:mellin}, we shift the line of integration to the left and pick up residues at the poles. This is
  possible because of the aforementioned growth properties of $Y(s,u)$. The
  main contribution comes from the pole at $s=0$, where the residue is indeed
  \begin{equation*}
    \frac{f_m(u)}{m!} (\log 1/t)^m + \frac{f_{m-1}(u)}{(m-1)!} (\log 1/t)^{m-1} 
    + \cdots + f_1(u) (\log 1/t)  + f_0(u).
  \end{equation*}
  There are further poles at all multiples of $2\pi i /\log p_j$ ($1 \leq j
  \leq m$), which are all simple poles (no two of them coincide) in view of the
  fact that the $p_j$ were assumed to be pairwise coprime, hence they only
  contribute $\fOh{1}$. In fact, the $\fOh{1}$ term can be replaced by a sum of
  $m$ Fourier series with periods $\log p_j$ ($1 \leq j \leq m$) that are given by
\begin{align*}
\Psi_j(t) &= \sum_{k \in \Z \setminus \{0\}} \operatornamewithlimits{Res}_{s = 2\pi i k /\log p_j} Y(s,u)D(s) t^{-s} \\
&= 
\sum_{k \in \Z \setminus \{0\}} Y \Big( \frac{2\pi i k}{\log p_j},u\Big) \cdot \frac{1}{\log p_j} \prod_{\substack{r=1 \\ r \neq j}}^m \frac{1}{1 - p_r^{-2\pi i k /\log p_j}} \exp \Bigl( -\frac{2\pi i k \log t}{\log p_j} \Bigr).
\end{align*}
 We remark that these Fourier series have exponentially decaying coefficients, since
  $Y(s,u)$ decays exponentially in imaginary direction, while Baker's theorem
  on linear forms in logarithms (see Chapter 12 of
  \cite{Cohen:2007:number-theory}) guarantees that
  \begin{equation*}
    \prod_{\substack{r=1 \\ r \neq j}}^m \frac{1}{\abs[big]{1 - p_r^{-2\pi i k /\log p_j}}}
  \end{equation*}
  is bounded above by a power of $k$: indeed, there exist constants $A_{\Lambda}, B_{\Lambda}$ (depending on the bases $p_1,p_2,\ldots,p_m$) such that
  \begin{equation*}
    \Lambda = \abs{k \log p_r - \ell \log p_j} \geq A_{\Lambda}k^{-B_{\Lambda}}
  \end{equation*}
for all $r \neq j$ and integers $k$ and $\ell$ not equal to $0$. Thus, if $\|\cdot\|$ denotes the distance to the nearest integer,
\begin{equation*}
  \left\|k \frac{\log p_r}{\log p_j} \right\| \geq \frac{A_{\Lambda}}{\log p_j}k^{-B_{\Lambda}}
\end{equation*}
and consequently
\begin{equation*}
  \abs[big]{1 - p_r^{-2\pi i k /\log p_j}}
  = \abs[Big]{1 - \exp \Bigl( - \frac{2\pi i k \log p_r}{\log p_j} \Bigr)}
  \geq 4 \left\| \frac{k \log p_r}{\log p_j} \right\| 
  \geq \frac{4A_{\Lambda}}{\log p_j}k^{-B_{\Lambda}}.
\end{equation*}
It follows that
  \begin{equation*}
    \prod_{\substack{r=1 \\ r \neq j}}^m \frac{1}{\abs[big]{1 - p_r^{-2\pi i k /\log p_j}}} = \fOh{k^{(m-1)B_{\Lambda}}},
  \end{equation*}
which in turn means that
\begin{equation*}
  \operatornamewithlimits{Res}_{s = 2\pi i k /\log p_j} Y(s,u)D(s) t^{-s}
  = \fOh{ \abs{k}^{(m-1)B_{\Lambda}} e^{-2\pi\epsilon\abs{k}/\log p_j}}.
\end{equation*}
Thus each of the Fourier series $\Psi_j$ is convergent and indeed represents a smooth function. This proves the asymptotic formula for
  $f(t,u)$. The derivatives $\frac{\partial}{\partial t} f(t,u)$ and
  $\frac{\partial^2}{\partial t^2} f(t,u)$ have Mellin transforms
  $(1-s)Y(s-1,u)D(s-1)$ and $(s-1)(s-2)Y(s-2,u)D(s-2)$ respectively, so
  essentially the same arguments apply, now with the main terms coming from the
  poles at $1$ and $2$ respectively.

  It remains to prove the estimate for the third derivative. Note that it can
  be written as
  \begin{equation*}
    \frac{\partial^3}{\partial t^3} f(t,u) = \sum_{h \in \Se} h^3 e^{-ht} 
    \frac{Q(e^{-ht},u)}{(1+ue^{-ht} + \cdots + u^{d-1}e^{-(d-1)ht})^3},
  \end{equation*}
  where $Q$ is some polynomial. If we choose $\eta$ (recall that our result
  will be valid for $\abs{\Im t} \leq \eta$) small enough so that the
  denominator stays away from $0$ (compare the analysis of $Y(s,u)$ above), the
  last factor is uniformly bounded by a constant. The Mellin transform of
  \begin{equation*}
    \sum_{h \in \Se} h^3 e^{-ht}
  \end{equation*}
  is given by $\Gamma(s)D(s-3)$, to which we can apply the same arguments as
  for the harmonic sums encountered before. The dominant singularity is clearly
  a pole of order $m$ at $s=3$ in this case, so that the desired estimate
  follows immediately.

  The proofs of the second and the third part of Lemma~\ref{lem:asy_central}
  are analogous.
\end{proof}

\section{Estimating the Tails}\label{sec:tails}

For our application of the saddle-point method, we need to estimate the tails
(i.e., the parts where $z$ is away from the positive real axis) of the
generating functions given in~\eqref{GF},~\eqref{GF2} and~\eqref{GF3}. This is done in the
following sequence of lemmas. First of all, let us introduce some notation. For
$r > 0$, we set
\begin{equation*}
  \Se(r) = \Se \cap [1,1/r] = \{h \in \Se\,:\, hr \leq 1\}.
\end{equation*}
It is straightforward to prove that
\begin{equation}\label{eq:card_est}
\abs{\Se(r)} = \frac{(\log 1/r)^m}{m! \prod_{j=1}^m \log p_j} + \fOh{(\log 1/r)^{m-1}}
\end{equation}
as $r\to0^+$. Note that later (starting with the next section), $r$ will be determined by the saddle point equation.

\begin{lem}\label{lem:tail1}
  Let $u$ be in the interval $[\frac12,2]$, and let $z = e^{-r+2\pi iy}$ with
  $r > 0$ and $y \in [-\frac12,\frac12]$. There exists an absolute constant $C$
  such that
  \begin{align*}
    \frac{\abs{F(z,u)}}{F(\abs{z},u)}
    &\leq \fexp[bigg]{- C \sum_{h \in \Se(r)} \dni{hy}^2 }, \\
    \frac{\abs{G(z,u)}}{G(\abs{z},u)}
    &\leq \fexp[bigg]{- C \sum_{h \in \Se(r)} \dni{hy}^2 }
    \intertext{and}
    \frac{\abs{H_b(z,u)}}{H_b(\abs{z},u)}
    &\leq \fexp[bigg]{- C \sum_{h \in \Se(r)} \dni{hy}^2 },
  \end{align*}
  where $\dni{\,\cdot\,}$ denotes the distance to the nearest integer.
\end{lem}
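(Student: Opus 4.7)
The plan is to reduce the inequality to a uniform per-factor estimate. Writing $F(z,u) = \prod_{h \in \Se} P(z^h,u)$ with $P(w,u) = 1 + uw + \cdots + u^{d-1}w^{d-1}$, the triangle inequality (valid because the coefficients of $P(\,\cdot\,,u)$ are non-negative for $u \geq 0$) already gives $\abs{P(z^h,u)} \leq P(\abs{z}^h,u)$ for every $h$. Hence every factor of $\abs{F(z,u)}/F(\abs{z},u)$ is at most $1$, and it suffices to sharpen this bound to $1 - C \dni{hy}^2$ for the factors indexed by $h \in \Se(r)$ before multiplying; should some factor vanish, the entire ratio is $0$ and the claim is trivial.

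The central calculation would be the identity
\begin{equation*}
P(\abs{w},u)^2 - \abs{P(w,u)}^2 = \sum_{j,k=0}^{d-1} u^{j+k} \abs{w}^{j+k}\bigl(1 - \cos((j-k)\Arg w)\bigr),
\end{equation*}
obtained from $\abs{P(w,u)}^2 = P(w,u)\overline{P(w,u)}$. Retaining only the two $(j,k) \in \{(0,1),(1,0)\}$ summands yields the lower bound $4u\abs{w}\sin^2(\tfrac12 \Arg w)$. Setting $w = z^h$ for $h \in \Se(r)$, I would have $\abs{w} \in [e^{-1},1]$ and $\Arg w \equiv 2\pi hy \pmod{2\pi}$; combining $\abs{\sin(\pi x)} \geq 2\dni{x}$ with the uniform upper bound $P(\abs{w},u) \leq 2^d - 1$ (valid for $u \leq 2$, $\abs{w} \leq 1$) would give $\abs{P(z^h,u)}^2/P(\abs{z}^h,u)^2 \leq 1 - C_0 \dni{hy}^2$ with $C_0 > 0$ depending only on the fixed $d$. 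After taking a square root and applying the elementary $\flog{1-x} \leq -x$ (legitimate because $C_0 \dni{hy}^2 \leq C_0/4$ stays below $1$ for my $C_0$), summing over $h \in \Se(r)$ concludes the bound for $F$ with $C = C_0/2$.

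For $G$ and $H_b$ the argument is structurally identical: the factor polynomials $Q(w,u) = 1 + u(w + \cdots + w^{d-1})$ and $R(w,u) = 1 + w + \cdots + u w^b + \cdots + w^{d-1}$ (with the obvious modification $u + w + \cdots + w^{d-1}$ when $b = 0$) again have non-negative coefficients for $u \in [1/2,2]$, and in every case the constant term and the $w$-coefficient are both at least $1/2$. The $(j,k)=(1,0)$ contribution in the corresponding expansion therefore still supplies a positive lower bound of order $\abs{w}\,\dni{hy}^2$, and the remaining two steps carry over verbatim.

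The main obstacle I anticipate is purely administrative: verifying that a single $C_0$ can be chosen uniformly in $u \in [1/2,2]$, in $h \in \Se(r)$, and across the three generating functions. This is routine because $u$ and $\abs{w} = e^{-hr}$ both vary in compact intervals ($[1/2,2]$ and $[e^{-1},1]$ respectively for $h \in \Se(r)$), while the denominators $P$, $Q$, $R$ evaluated at $\abs{w}$ are continuous and bounded above on that region, so a compactness argument yields the required uniform constant. No further analytic input beyond the trigonometric identity above is needed.
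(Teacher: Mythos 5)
Your argument is correct, and it reaches the per-factor estimate by a different route than the paper. The paper never expands $\abs{P(z^h,u)}^2$ in full: for even $d$ it groups the $d$ terms of each factor into pairs sharing the binomial $1+uz^h$ and applies an explicit identity for $\abs{1+aw}^2/(1+a\abs{w})^2$, with a separate trinomial identity handling odd $d$ (and yet another choice of parameters for $G$ and $H_b$); this yields the per-factor decay $\fexp[big]{-\tfrac{1}{50}(\abs{z}^h-\f{\Re}{z^h})}$ and, after restricting to $h\in\Se(r)$, a constant that is absolute, i.e.\ independent of $d$ as well as of $u$. Your route---writing $S(\abs{w})^2-\abs{S(w)}^2$ as the non-negative double sum $\sum_{j,k}c_jc_k\abs{w}^{j+k}(1-\cos((j-k)\Arg w))$ and keeping only the $(0,1)$ and $(1,0)$ terms---buys uniformity: it treats $F$, $G$ and $H_b$ (indeed any factor polynomial with positive constant and linear coefficients) at once, with no parity distinction, since all you need is $c_0c_1\ge\tfrac14$, $\abs{w}=e^{-hr}\ge e^{-1}$ for $h\in\Se(r)$, $\sin^2(\pi hy)\ge 4\dni{hy}^2$, and a crude upper bound on $S(\abs{w})$, the factors with $h\notin\Se(r)$ being discarded via the triangle inequality. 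The only price is that your constant $C$ depends on $d$ through the bounds $S(\abs{w})\le 2^d-1$ (resp.\ $2d-1$, $d+1$), so it is not literally an \emph{absolute} constant as the lemma states; since $d$ is fixed throughout the paper and every later application (in particular the proof of Lemma~\ref{lem:saddle}) only uses positivity of $C$, this is harmless, but worth flagging. The remaining steps you outline ($\abs{\sin\pi x}\ge 2\dni{x}$, $(1-x)^{1/2}\le e^{-x/2}$ for $0\le x<1$, and the compactness remark giving uniformity in $u\in[\tfrac12,2]$) all check out.
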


\begin{proof}
  For positive real $a$ and complex $w$, we have the two identities
  \begin{equation*}
    \frac{\abs{1+a w}^2}{(1+a \abs{w})^2} = 1 - \frac{2a(\abs{w}-\Re w)}{(1+a\abs{w})^2}
  \end{equation*}
  and
  \begin{equation*}
    \frac{\abs{1+a w+a w^2}^2}{(1+a \abs{w}+a \abs{w}^2)^2}
    = 1 - \frac{2a(\abs{w}-\Re w)(1+2\abs{w}+a\abs{w}^2+2\Re w)}{(1+a\abs{w}+a\abs{w}^2)^2}.
  \end{equation*}
  Assuming that $a \in [\frac12,2]$ and $\abs{w} \leq 2$, we get
  \begin{equation}\label{eq:est1}
    \frac{\abs{1+a w}^2}{(1+a \abs{w})^2} \leq 1 - \frac{1}{25}(\abs{w}-\Re w)
    \leq \fexp[Big]{- \frac{1}{25} (\abs{w} - \Re w) }
  \end{equation}
  and
  \begin{equation}\label{eq:est2}
    \frac{\abs{1+a w+a w^2}^2}{(1+a \abs{w}+a \abs{w}^2)^2} 
    \leq 1 - \frac{1}{169} (\abs{w}-\Re w) 
    \leq \fexp[Big]{- \frac{1}{169} (\abs{w} - \Re w) }.
  \end{equation}
  Now let $d$ be even, set $a = u$ and $w = z^h$, so that~\eqref{eq:est1},
  together with the triangle inequality, yields
  \begin{align*}
    &\abs{1 + u z^h + u^2 z^{2h} + \cdots + u^{d-1}z^{(d-1)h}} \\
    &\qquad\leq \abs{1+u z^h} + u^2 \abs{z}^{2h} \abs{1+u z^h} 
    + \cdots + u^{d-2}\abs{z}^{(d-2)h} \abs{1+u z^h} \\
    &\qquad\leq \big( 1 + u \abs{z}^h + u^2 \abs{z}^{2h} + \cdots 
      + u^{d-1}\abs{z}^{(d-1)h}\big) 
    \fexp[Big]{-\frac{1}{50} \big(\abs{z}^h - \f{\Re}{z^h} \big)}.
  \end{align*}
  Taking the product over all $h \in \Se$ gives
  \begin{align*}
    \abs{F(z,u)} &\leq F(\abs{z},u) \fexp[Big]{- \frac{1}{50} 
      \sum_{h \in \Se} \big(\abs{z}^h - \f{\Re}{z^h} \big)} \\
    &= F(\abs{z},u) \fexp[Big]{- \frac{1}{50}
      \sum_{h \in \Se} e^{-hr}\left( 1- \cos(2\pi hy) \right)} \\
    &\leq F(\abs{z},u) \fexp[Big]{- \frac{1}{50e} 
      \sum_{h \in \Se(r)} \left( 1- \cos(2\pi hy) \right)} \\
    &\leq F(\abs{z},u)
    \fexp[Big]{- \frac{8}{50e} \sum_{h \in \Se(r)} \dni{hy}^2 },
  \end{align*}
  which proves the first statement of the lemma with $C = 4/(25e)$. For odd
  $d$, we can argue in a similar fashion, but we also apply~\eqref{eq:est2}
  (with $a = 1$ and $w = uz^h$) and use the triangle inequality in the
  following way:
  \begin{multline*}
    \abs[big]{1 + u z^h + u^2 w^2 + \cdots + u^{d-1}z^{(d-1)h}} \\
    \leq \abs[big]{1+u z^h+u^2 z^{2h}} + u^3 \abs{z}^{3h} \abs[big]{1+u z^h} 
    + \cdots + u^{d-2}\abs{z}^{(d-2)h} \abs[big]{1+u z^h}.
  \end{multline*}
  For the generating function $G(z,u)$, the reasoning is fully analogous, but
  we also have to use~\eqref{eq:est2} with $a=u$ and $w=z^h$. A similar
  situation occurs for $H_b(z,u)$.
\end{proof}

Next we estimate the sum that occurs in the previous lemma. When $m > 2$,
relatively simple estimates suffice for our purposes, while we need an
additional auxiliary result in the case that $m=2$. The following lemma
provides the necessary estimates.

\begin{lem}\label{lem:tail2}
Let $r > 0$ and $y \in [-\frac12,\frac12]$, and set
\begin{equation*}
  \Sigma = \Sigma(r,y) =  \sum_{h \in \Se(r)} \dni{hy}^2,
\end{equation*}
where again $\dni{\,\cdot\,}$ denotes the distance to the nearest integer.
For sufficiently small $r$, we have the following estimates for $\Sigma$.
\begin{enumerate}
\item[(a)] If $\abs{y} \leq r/2$, then $\Sigma \geq A_1 (y/r)^2 (\log
  (1/r))^{m-1}$ for some positive constant $A_1$ (that only depends on $m$ and
  the set of bases $\{p_1,p_2,\dots,p_m\}$).
\item[(b)] If $\abs{y} \geq r/2$, then $\Sigma \geq A_2 (\log (1/r))^{m-1}$ for
  some positive constant $A_2$ (that also only depends on $m$ and the set of
  bases $\{p_1,p_2,\dots,p_m\}$).
\end{enumerate}
Now let $m = 2$. For any constant $K > 0$ and any $\delta > 0$, there exists a constant $B > 0$ depending on $p_1,p_2,K$ and $\delta$ such that
the following holds for sufficiently small $r$.
\begin{enumerate}
\item[(c)] We have $\Sigma \geq K \log (1/r)$, except when $y$ lies in a
  certain set $E(K,r)$ of Lebesgue measure at most $Br^{1-\delta}$.
\end{enumerate}
\end{lem}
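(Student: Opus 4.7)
For parts~(a) and~(b), the plan is to exhibit an explicit subset of $\Se(r)$ on which $\dni{hy}^2$ admits a clean lower bound, and then count that subset via the lattice-point structure of the monoid. In part~(a), $\abs{y}\leq r/2$ together with $h\leq 1/r$ forces $h\abs{y}\leq 1/2$, so $\dni{hy}=h\abs{y}$ and $\Sigma=y^2\sum_{h\in\Se(r)}h^2$. Restricting the inner sum to the dyadic shell $\Se(r)\setminus\Se(2r)$ makes each summand at least $1/(4r^2)$; this shell has size $\gtrsim(\log(1/r))^{m-1}$ by a direct count of tuples $(\alpha_1,\dots,\alpha_m)\in(\mathbb{N}\cup\{0\})^m$ with $\sum_j\alpha_j\log p_j$ in a fixed-width log-slab. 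Part~(b) proceeds by the same template but with the \emph{resonant} shell $h\in\Se$ satisfying $h\abs{y}\in[1/4,1/2]$: these automatically satisfy $\dni{hy}\geq 1/4$ and lie in $\Se(r)$, since $\abs{y}\geq r/2$ implies $1/(2\abs{y})\leq 1/r$. The analogous lattice-point count provides $\gtrsim(\log(1/\abs{y}))^{m-1}$ such elements, which is $\gtrsim(\log(1/r))^{m-1}$ as long as $\abs{y}\leq r^{1/2}$. For the residual range $\abs{y}\in[r^{1/2},1/2]$ the resonant shell is too narrow, and I would instead switch to a sub-monoid of $\Se$ generated by a proper subset of $\{p_1,\dots,p_m\}$, chosen so that the excluded bases include the prime factors of a small-denominator rational approximation of $y$. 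This sub-monoid still contains $\gtrsim(\log(1/r))^{m-1}$ elements in $[1,1/r]$, and on each of them $\dni{hy}$ is bounded below by a constant depending only on the bases.

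Part~(c) is where the bulk of the work will lie, because the target measure $Br^{1-\delta}$ is much smaller than a naive variance estimate on $\Sigma$ could deliver---Chebyshev applied to $\Sigma(y)$ gives an exceptional measure only of order $\fOh{1}$. My plan is to translate the statistical condition $\Sigma(y)<K\log(1/r)$ into a Diophantine one. Slicing $\Se(r)$ into rows parametrized by the exponent $a$ of $p_1$, the inner sums $\Sigma_a(y)=\sum_b\dni{p_1^a p_2^b y}^2$ are single-base sums evaluated at the multiplicative shifts $p_1^a y$. By a classical Weyl/Fourier estimate each set $\{y:\Sigma_a(y)<K'\}$ has small Lebesgue measure, and pairs $(\Sigma_{a_1},\Sigma_{a_2})$ become approximately uncorrelated once $\abs{a_2-a_1}$ is sufficiently large (the shift $y\mapsto p_1^{a_2-a_1}y\bmod 1$ mixes the base-$p_1$ expansion of $y$). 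A simple averaging shows that $\Sigma(y)<K\log(1/r)$ forces many of the $\Sigma_a$ to be simultaneously small, and a decorrelation-based union bound over a well-spaced subset of rows then produces the $\fOh{r^{1-\delta}}$ measure estimate, once the count is combined with the Dirichlet series $\sum_{h\in\Se}h^{-s}=\prod_j(1-p_j^{-s})^{-1}$ evaluated near $s=1$.

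The \textbf{main obstacle} is establishing this decorrelation across rows rigorously: the events $\{\Sigma_a<K'\}$ are not independent, since they all depend on the same $y$, so one must quantify how the shift $y\mapsto p_1^a y\bmod 1$ mixes the base-$p_1$ expansion of $y$---essentially an ergodic-theoretic statement about the joint distribution of $(p_1^a p_2^b y\bmod 1)$ as $(a,b)$ ranges over the exponent lattice of $\Se(r)$. The coprimality of $p_1$ and $p_2$ enters decisively here: via Baker's theorem on linear forms in logarithms (already invoked in Lemma~\ref{lem:asy_central} for a related purpose) one obtains effective decorrelation rates. Tuning the parameters---the threshold on $\dni{hy}$, the row spacing in the decorrelation step, and the Diophantine exponent---so that both $\Sigma\geq K\log(1/r)$ outside the exceptional set and $\abs{E(K,r)}\leq Br^{1-\delta}$ hold simultaneously is, I expect, the most delicate part of the proof.
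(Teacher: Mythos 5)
Parts (a) and the small-$\abs{y}$ case of (b) are essentially the paper's own argument (a shell of $\Se(r)$ near $1/r$, resp.\ near $1/\abs{y}$, counted via the cardinality asymptotics~\eqref{eq:card_est}), and are fine. The genuine gap in (b) is your treatment of the remaining range, say $\abs{y}\in[r^{2/3},\tfrac12]$. Your claim that one can pass to a sub-monoid on which $\dni{hy}$ is bounded below by a \emph{constant} for every $h\leq 1/r$ is false: if $y$ equals (or is well approximated by) $a/q$ with $q\asymp r^{-2/3}$ and $\gcd(q,p_1\cdots p_m)=1$, then excluding bases changes nothing, and the only pointwise bound that non-divisibility gives is $\dni{ha/q}\geq 1/q$, which is polynomially small in $r$ and yields merely $\Sigma\gtrsim q^{-2}(\log 1/r)^{m-1}$ --- useless. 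The paper has to work much harder here: after Dirichlet approximation it first produces $\gtrsim(\log q)(\log 1/r)^{m-1}$ elements $h_1\leq r^{-1/3}$ with $\dni{ah_1/q}\geq 1/q$ (a divisibility argument on the exponent of one base), then pigeonholes the values $\dni{ah_1/q}$ into intervals $[1/(2p_1^{j+1}),1/(2p_1^{j})]$ and multiplies the corresponding $h_1$ by $p_1^{j}$, which \emph{amplifies} the distance to a constant $1/(2p_1)$ while keeping $h=h_1p_1^{j}\leq h_1q\leq 1/r$ and keeping the approximation error $h\abs{y-a/q}$ under control. This amplification step is the heart of part (b) and is absent from your proposal.

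For (c), slicing $\Se(r)$ into rows by the exponent of one base is the right start, but the decorrelation across rows that you flag as the main obstacle really is an obstacle, and it is not one Baker's theorem removes: quantitative joint mixing of $y\mapsto p_1^{a}p_2^{b}y\bmod 1$ is $\times p,\times q$ territory, and no effective estimate of the required strength is available off the shelf. The key point you are missing is that \emph{no} cross-row independence is needed, because a single row already suffices if the per-row event is chosen strongly enough. The paper defines the bad event for the row indexed by $k$ (exponent of $p_2$) as ``all but at most $M$ of the $\approx\log_{p_1}(1/r)$ points $p_1^{\ell}(p_2^{k}y)$ lie within $p_1^{-2}$ of an integer''; this forces long runs of equal digits in the base-$p_1$ expansion of $p_2^{k}y\bmod 1$, so the bad set has measure $\fOh{L^{M}p_1^{M-L}}=\fOh{r^{1-\epsilon}(\log 1/r)^{M}}$ --- already below the target. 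A plain union bound over the $\fOh{\log 1/r}$ rows (using only that $v\mapsto p_2^{k}v\bmod 1$ preserves Lebesgue measure) then gives $\lambda(E)=\fOh{r^{1-\delta}}$, and outside $E$ each row contributes at least $Mp_1^{-4}$ to $\Sigma$, giving $\Sigma\gtrsim M\log(1/r)$. By contrast, a ``classical Weyl/Fourier'' bound on $\{y:\Sigma_a(y)<K'\}$ gives nothing near measure $r^{1-\delta}$ per row, so your route would genuinely need the unproven decorrelation. As written, (c) is a plan with an acknowledged hole, and the missing ingredient is a different mechanism, not a technical refinement.
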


\begin{proof}
For better readability, the proof is split into several claims.

  \begin{beh}\label{beh:a-correct}
    Statement (a) is correct.
  \end{beh}
  \begin{proof}[Proof of \behref{beh:a-correct}]
    Let $\abs{y} \leq r/2$, which implies $\abs{hy} \leq \frac12$ for all $h
    \in \Se(r)$. Then we have
    \begin{equation*}
      \Sigma = \sum_{h \in \Se(r)} \dni{hy}^2 
      = \sum_{h \in \Se(r)} h^2y^2
      \geq  \sum_{\substack{h \in \Se(r) \\ h \notin \Se(r/\rho)}} h^2y^2
      \geq \rho^2 (y/r)^2 \left( \abs{\Se(r)} - \abs{\Se(r/\rho)} \right)
    \end{equation*}
    for any $\rho > 0$. If we take $\rho$ sufficiently small and apply the
    asymptotic formula in~\eqref{eq:card_est}, we obtain estimate (a).
  \end{proof}

  \begin{beh}\label{beh:b-correct-1}
    $A_2$ can be chosen in such a way that statement (b) holds for $\abs{y} \leq r^{2/3}$.
  \end{beh}
  \begin{proof}[Proof of \behref{beh:b-correct-1}]
    Let us assume that $r/2 \leq \abs{y} \leq r^{2/3}$. Then we have $\log
    \abs{1/y} \geq \frac23 \log (1/r)$, and essentially the same idea as above
    works again. We obtain
    \begin{equation*}
      \Sigma = \sum_{h \in \Se(r)} \dni{hy}^2 \geq \sum_{h \in \Se(2\abs{y})} h^2y^2
      \geq  \sum_{\substack{h \in \Se(2\abs{y}) \\ h \notin \Se(\abs{y}/\rho)}} h^2y^2
      \geq \rho^2 \left( \abs{\Se(2\abs{y})} - \abs{\Se(\abs{y}/\rho)} \right),
    \end{equation*}
    and formula~\eqref{eq:card_est} can be applied again to obtain (b).
  \end{proof}
  
  We are left with the case that $\abs{y} > r^{2/3}$, so we will assume this
  from now on. By Dirichlet's approximation theorem, there exists a rational
  number $a/q$ (with coprime $a$ and $q$) such that $q \leq r^{-2/3}$ and
  \begin{equation*}
    \abs[bigg]{y - \frac{a}{q}} \leq \frac{r^{2/3}}{q}.
  \end{equation*}

  \begin{beh}\label{beh:many-elements-h1}
    There exists a positive constant $c_1$ that only depends on $m$ and the set
    of bases $\{p_1,p_2,\ldots,p_m\}$ such that for small enough $r$ and any
    coprime integers $a$, $q$ with $q \leq r^{-2/3}$, there are at least
    \begin{equation*}
      c_1 (\log q)(\log 1/r)^{m-1}
    \end{equation*}
    many elements $h_1 \in \Se(r^{1/3})$ with $\dni{ah_1/q} \geq 1/q$.
  \end{beh}

  \begin{proof}[Proof of \behref{beh:many-elements-h1}]
    For $q = 1$, the statement is trivial, so we assume that $q \neq 1$. Let us now distinguish whether $q$ is in the set $\Se$ or
    not.

    If $q \in \Se$, then write $q = p_1^{\alpha_1}p_2^{\alpha_2} \cdots
    p_m^{\alpha_m}$. We have
    \begin{equation*}
      A = \max(\alpha_1,\alpha_2,\ldots,\alpha_m) 
      \geq \frac{\log q}{\log(p_1p_2\dots p_m)}.
    \end{equation*}
    Suppose that $\alpha_i = A$. Consider the elements $h_1 =
    p_1^{\beta_1}p_2^{\beta_2} \cdots p_m^{\beta_m} \in \Se$ with $0 \leq
    \beta_i < \alpha_i = A$. For any of these $h_1$, the number $ah_1/q$ is not
    an integer and thus $\dni{ah_1/q} \geq 1/q$. Let us now find a lower bound
    for the number of such elements~$h_1$. Using~\eqref{eq:card_est} (applied
    to the set $\Se_i = \{s \in \Se\,:\, p_i \nmid s\}$), we find that
    for some positive constants $\hat c_1$ and $c_1$, there exist at least
    \begin{equation*}
      \hat c_1 A \abs[big]{\Se_i(r^{1/3})}
      \geq c_1 (\log q)(\log 1/r)^{m-1}
    \end{equation*}
    elements $h_1 \in \Se$ with $h_1 \leq r^{-1/3}$.

    If $q \notin \Se$, then we clearly have $\dni{h_1a/q} \geq 1/q$ for all
    $h_1 \in \Se$, so the same statement as in the first case holds again.
  \end{proof}

  \begin{beh}\label{beh:many-elements-h}
    There exists a positive constant $c$ that only depends on $m$ and the set of bases $\{p_1,p_2,\ldots,p_m\}$ such that for sufficiently small $r$ and $r^{2/3} < \abs{y} \leq \frac12$, there are at least
    \begin{equation*}
      c (\log 1/r)^{m-1}
    \end{equation*}
    many elements $h \in \Se(r)$ with $\dni{hy} \geq 1/(3p_1)$.
  \end{beh}

  \begin{proof}[Proof of \behref{beh:many-elements-h}]
    Let us divide the interval $[1/q,1/2]$ into subintervals
    \begin{equation*}
      \text{$I_0 = [1/(2p_1),1/2]$, $I_1 = [1/(2p_1^2),1/(2p_1)]$, \dots}
    \end{equation*}
    whose ends have a ratio of $p_1$ (except possibly for the last one). There
    are at most
    \begin{equation*}
      \log(q/2)/\log(p_1) \leq c_2\log q
    \end{equation*}
    such intervals.

    By \behref{beh:many-elements-h1} and the pigeonhole principle, we can
    choose one of these intervals ($I_j$, say) such that for at least
    $c_1/c_2\, (\log 1/r)^{m-1}$ distinct numbers $h_1 \in \Se$ with $h_1 \leq
    r^{-1/3}$, the number $\dni{h_1a/q}$ lies in this interval~$I_j$, i.e., we
    have $1/(2p_1^{j+1}) \leq \dni{h_1a/q} \leq 1/(2p_1^j)$.

    Now we have
    \begin{equation*}
      \dni[bigg]{\frac{h_1 p_1^j a}{q}} 
      = p_1^j \dni[bigg]{\frac{h_1 a}{q}} \geq \frac{1}{2p_1},
    \end{equation*}
    which means that we have at least $c_1/c_2\, (\log 1/r)^{m-1}$ elements $h
    = h_1 p_1^j \in \Se$ with $\|ah/q\| \geq 1/(2p_1)$ and
    \begin{equation*}
      h = h_1 p_1^j \leq h_1 q \leq r^{-1/3} r^{-2/3} = \frac1r.
    \end{equation*}
    All of these numbers $h$ are therefore in the set $\Se(r)$. For sufficiently small $r$, it follows that
    \begin{equation*}
      \dni{ hy} \geq \dni[bigg]{\frac{h a}{q}} - \frac{r^{2/3}h}{q} \geq
      \frac{1}{2p_1} - \frac{r^{2/3}h_1 q}{q} \geq \frac{1}{2p_1} - r^{1/3} \geq
      \frac{1}{3p_1},
    \end{equation*}
    which proves the claim.
  \end{proof}

  \begin{beh}\label{beh:b-correct-2}
    $A_2$ can be chosen in such a way that statement (b) holds for $\abs{y} \geq r^{2/3}$.
  \end{beh}
  \begin{proof}[Proof of \behref{beh:b-correct-2}]
    The result follows from \behref{beh:many-elements-h} since
    \begin{equation*}
      \Sigma \geq c \Big( \log \frac1r \Big)^{m-1}
      \cdot \Big( \frac{1}{3p_1} \Big)^2 = A_2 \Big(\! \log \frac1r \Big)^{m-1}
    \end{equation*}
    for $A_2 = c/(9p_1^2)$ if $r$ is sufficiently small.
  \end{proof}

So (b) is now proven in both cases, and it remains to prove statement (c) of the lemma, so assume that $m=2$. Choose
  some $\epsilon \in (0,\delta)$, set
  \begin{equation*}
    L = \lfloor (1-\epsilon)\log_{p_1} 1/r \rfloor
  \end{equation*}
  and define, for a positive integer $M$, the set
  \begin{equation*}
    D(M) = \big\{v \in [0,1]\,:\, \dni[big]{p_1^\ell v } < p_1^{-2}
    \text{ for $0 \leq \ell \leq L$ with at most $M$ exceptions}\big\}.
  \end{equation*}
  The constant $M$ will be chosen appropriately at the end of the proof.
  
  We get the following result, which almost proves (c).

  \begin{beh}\label{beh:almost-c}
    Set $R = \lfloor \epsilon \log_{p_2} 1/r \rfloor$. If $y$ is not contained
    in the set
    \begin{equation*}
      E = \bigcup_{k \leq R} \{y \in [-\tfrac12,\tfrac12] \,:\, p_2^k y \bmod 1 \in D(M) \},
    \end{equation*}
    then
    \begin{equation*}
      \Sigma \geq \epsilon p_1^{-2} M \log_{p_2} 1/r.
    \end{equation*}
  \end{beh}

  \begin{proof}[Proof of \behref{beh:almost-c}]
    By our assumptions, there is no $k \leq R$ such that $p_2^k y \bmod 1 \in
    D(M)$. Therefore, for a fixed $k$ the inequality $\dni[big]{p_1^\ell p_2^k
      y} \geq p_1^{-2}$ holds for more than $M$ choices of $\ell \leq L$.
    Moreover, we have $p_1^\ell p_2^k \leq r^{-1+\epsilon} \cdot r^{-\epsilon}
    = r^{-1}$ for all  such $k$ and $\ell$.

    It follows that
    \begin{equation*}
      \Sigma = \sum_{h \in \Se(r)} \dni{hy}^2 
      \geq \sum_{\ell \leq L} \sum_{k \leq R} 
      \dni[big]{p_1^\ell p_2^k y}^2 \geq (R+1) M p_1^{-2} 
      \geq \epsilon p_1^{-2} M \log_{p_2} 1/r,
    \end{equation*}
    which is what we wanted to show.
  \end{proof}

  It remains to show that the set $E$ is small. This is done in the following
  two claims.

  \begin{beh}\label{beh:meas-DM}
    The Lebesgue measure of the set $D(M)$ is at most $\fOh{L^M p_1^{M-L} }$.
  \end{beh}

  \begin{proof}[Proof of \behref{beh:meas-DM}]
    First, note that $\dni[big]{p_1^\ell v} \geq p_1^{-2}$ unless the
    $(\ell+1)$-th and the $(\ell+2)$-th digit after the decimal\footnote{We
      should rather correctly say ``$p_1$-point'' instead of ``decimal point''
      since $p_1$ is the base of our numeral system, but this may lead to even
      more confusion.} point in the $p_1$-adic expansion of $v$ are either both
    $0$ or both $p_1-1$. For an upper bound, we relax this condition to both
    digits being equal.

    Therefore, for an element of $D(M)$, at least $L-M+1$ of the first $L+2$
    digits have to be equal to the previous digit. Allowing exactly $j \leq M$
    exceptions, there are $\binom{L+1}{j}$ number of ways to choose the
    ``exceptional'' digits. Moreover, each digit that has to be equal to the
    previous one reduces the Lebesgue measure by a factor of $p_1$.

    Putting everything together, we end up finding that the Lebesgue measure of $D(M)$ is at most
    \begin{equation*}
      \sum_{j=0}^M \binom{L+1}{j}p_1^{-(L+1)+j} = \fOh{L^M p_1^{M-L} },
    \end{equation*}
    which proves the claim.
  \end{proof}

  We need one more claim, which concerns the size of the exceptional set~$E$.

  \begin{beh}\label{beh:meas-E}
    The set $E$ has Lebesgue measure $\fOh{r^{1-\epsilon} (\log 1/r)^{M+1}}$.
  \end{beh}

  \begin{proof}[Proof of \behref{beh:meas-E}]
    Since $y \in [-\frac12,\frac12]$ (an interval of length~$1$) and
    $p_2^k$ is an integer, the Lebesgue measure $\lambda$ is preserved under
    taking the pre-image of $v\mapsto p_2^k v \bmod 1$. Therefore, we have
    \begin{equation*}
      \f{\lambda}{\{y \,:\, p_2^k y \bmod 1 \in D(M) \}}
      = \f{\lambda}{D(M)}
    \end{equation*}
    and obtain
    \begin{equation*}
      \lambda(E) \leq \sum_{k \leq R} \lambda(D(M)) 
      = \fOh{R L^M p_1^{M-L}} 
      = \fOh{r^{1-\epsilon} (\log 1/r)^{M+1}}.
    \end{equation*}
    Note that the implied constant only depends on $p_1$, $p_2$, $M$ and
    $\epsilon$.
  \end{proof}

  If we choose $M = \lceil K \epsilon^{-1} p_1^2\log p_2 \rceil$, then
  statement (c) follows from the claims above (in particular,
  \behref{beh:almost-c} and \behref{beh:meas-E}) with exceptional set $E =
  E(K,r)$. Note that $\lambda(E) = \fOh{r^{1-\epsilon} (\log 1/r)^{M+1}} =
  \fOh{r^{1-\delta}}$. This completes the proof.
\end{proof}

\section{Application of the Saddle-Point Method}\label{sec:saddle}

We are now ready to apply the saddle-point method (see Chapter VIII
of~\cite{Flajolet-Sedgewick:2009:analy} for an excellent introduction), which
gives us asymptotic formulas for the coefficients of the generating functions
$F(z,u)$, $G(z,u)$ and $H_b(z,u)$. In the following, we use the notations $f_t(t,u)$,
$f_{tt}(t,u)$, \ldots\@ for the derivatives of $f$ with respect to the first
coordinate.

\begin{lem}\label{lem:saddle}
  Let $u \in [\frac12,2]$, and define $r > 0$ implicitly by the saddle-point
  equation
  \begin{equation*}
    n = - f_t(r,u).
  \end{equation*}
  The coefficients of $F(z,u)$ satisfy the asymptotic formula
  \begin{equation*}
    [z^n] F(z,u) = \frac{1}{\sqrt{2\pi f_{tt}(r,u)}} e^{nr+f(r,u)} 
    \big( 1 + \fOh[big]{(\log n)^{-(m-1)/5}} \big),
  \end{equation*}
  uniformly in $u$. Likewise, if we define $r > 0$ by
  \begin{equation*}
    n = - g_t(r,u),
  \end{equation*}
  then the coefficients of $G(z,u)$ satisfy the asymptotic formula
  \begin{equation*}
    [z^n] G(z,u) = \frac{1}{\sqrt{2\pi g_{tt}(r,u)}} e^{nr+g(r,u)}
    \big( 1 + \fOh[big]{(\log n)^{-(m-1)/5}} \big),
  \end{equation*}
  uniformly in $u$, and if we define $r > 0$ by
  \begin{equation*}
    n = - h_{a,t}(r,u),
  \end{equation*}
  then the coefficients of $H_b(z,u)$ satisfy the asymptotic formula
  \begin{equation*}
    [z^n] H_b(z,u) = \frac{1}{\sqrt{2\pi h_{b,tt}(r,u)}} e^{nr+h_b(r,u)}
    \big( 1 + \fOh[big]{(\log n)^{-(m-1)/5}} \big).
  \end{equation*}
\end{lem}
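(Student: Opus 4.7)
The plan is to apply Cauchy's coefficient formula on the circle $|z| = e^{-r}$ where $r$ is the saddle point, then split the resulting integral into a central part and a tail. Writing $z = e^{-r+2\pi iy}$, we get
\begin{equation*}
  [z^n] F(z,u) = e^{nr} \int_{-1/2}^{1/2} F(e^{-r+2\pi iy},u)\, e^{-2\pi i n y}\, dy,
\end{equation*}
and I would split this at some $y_0 = r (\log 1/r)^{-\alpha}$ with $\alpha$ chosen in the window $\bigl[\tfrac{2(m-1)}{5}, \tfrac{m-1}{2}\bigr)$.

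On the central piece $|y|\le y_0$ I expand $f(r-2\pi iy,u)$ in $y$ around $0$. The linear term $(-2\pi iy)\, f_t(r,u)$ cancels against $-2\pi i n y$ by the saddle-point equation $n=-f_t(r,u)$, the quadratic term produces the Gaussian kernel $\exp(-2\pi^2 y^2 f_{tt}(r,u))$, and the cubic remainder is controlled by the bound $f_{ttt}=\fOh{r^{-3}(\log 1/r)^{m-1}}$ from Lemma~\ref{lem:asy_central}. With the chosen $\alpha$, the Taylor remainder on the central interval is $\fOh{(\log 1/r)^{m-1-3\alpha}}=\fOh{(\log n)^{-(m-1)/5}}$ in the exponent, so extending the Gaussian integral to all of $\mathbb{R}$ introduces only an error of the claimed order and yields the prefactor $1/\sqrt{2\pi f_{tt}(r,u)}$ times $e^{nr+f(r,u)}$.

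For the tails, Lemma~\ref{lem:tail1} gives the pointwise bound $|F(z,u)|\le F(|z|,u)\exp(-C\Sigma(r,y))$, and I would dispatch them using Lemma~\ref{lem:tail2}: on $y_0<|y|\le r/2$ part~(a) yields $\Sigma\ge A_1(y/r)^2(\log 1/r)^{m-1}$, so the contribution is super-polynomially small by the choice of $\alpha<(m-1)/2$; on $|y|\ge r/2$, part~(b) gives $\Sigma\ge A_2(\log 1/r)^{m-1}$. For $m\ge 3$ this is already super-polynomially small and trivially negligible compared to the main-term prefactor $\asymp r(\log 1/r)^{-(m-1)/2}$. The hard case is $m=2$, where (b) only gives a fixed power $r^{CA_2}$, which is not automatically smaller than $r/\sqrt{\log 1/r}$; here I would invoke Lemma~\ref{lem:tail2}(c) with $K$ taken sufficiently large, so that outside the exceptional set $E(K,r)$ the factor $\exp(-C\Sigma)\le r^{CK}$ beats any fixed power of $r$, while on $E(K,r)$ itself the trivial bound combined with the measure estimate $\lambda(E(K,r))=\fOh{r^{1-\delta}}$ (and, for the parts of $E(K,r)$ with $|y|\ge r/2$, a further application of (b) to gain an extra factor $r^{CA_2}$) keeps the contribution below the required error.

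The main obstacle is precisely this $m=2$ tail estimate: balancing the choice of $K$ and $\delta$ in Lemma~\ref{lem:tail2}(c) against the size of $1/\sqrt{f_{tt}(r,u)}$ is what forces the specific exponent $(m-1)/5$ in the error term and dictates the choice of $y_0$ above. The proofs for $G(z,u)$ and $H_b(z,u)$ are entirely parallel, since Lemma~\ref{lem:asy_central} provides the same type of central expansion and third-derivative bound for $g$ and $h_b$, and Lemmas~\ref{lem:tail1} and~\ref{lem:tail2} already cover all three generating functions simultaneously; one simply replaces $f$ by $g$ or $h_b$ throughout and repeats the saddle-point argument verbatim.
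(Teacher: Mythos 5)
Your proposal is correct and follows essentially the same route as the paper: Cauchy's formula with $r$ from the saddle-point equation, a central region of width $r(\log 1/r)^{-c}$ handled via the Taylor expansion and the third-derivative bound of Lemma~\ref{lem:asy_central}, and tails dispatched by Lemma~\ref{lem:tail1} together with parts (a), (b) of Lemma~\ref{lem:tail2} for $m\ge 3$ and part (c) plus the measure bound on $E(K,r)$ (with the extra factor $r^{CA_2}$ there) for $m=2$, choosing $CK>1$ and $\delta<CA_2$. One small correction to your closing remark: the exponent $(m-1)/5$ is not forced by the $m=2$ tail balancing but by the central-region split itself, namely the Taylor remainder $(\log 1/r)^{m-1-3c}$ with the paper's choice $c=2(m-1)/5$ (subject to $c<(m-1)/2$ for the Gaussian completion), which does not affect the correctness of your argument.
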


Let us first give a short outline on the proof, which we only present for $F$, since the other two cases are analogous. We start by using Cauchy's
integral formula to extract the coefficient of $z^n$ from $F(z,u)$. After the subsequent change to polar coordinates ($z = e^{-(r+it)}$), we choose $r$ to satisfy the saddle point equation. Thus the Taylor expansion in the central
region simplifies (the first order term vanishes).
Lemma~\ref{lem:asy_central} shows that $r$ is of order $(\log
n)^{m-1}/n$. In the central region (to be defined later), the error term is
$\fOh{\log(1/r)^{-(m-1)/5}}$ by Lemma~\ref{lem:asy_central}, and we can
complete the tails to get a Gaussian integral. The remaining parts of the
integral are estimated by means of Lemmas~\ref{lem:tail1}
and~\ref{lem:tail2}. If $m > 2$, parts (a) and (b) of Lemma~\ref{lem:tail2}
already give sufficiently strong bounds. In the case that $m = 2$, we have to
divide the tails further into a small ``exceptional part'', where we apply (b),
and the rest, where the stronger bound from (c) holds.

So much for the overview; let us start with the actual proof now.

\begin{proof}
  By Cauchy's integral formula, we have
  \begin{equation*}
    [z^n]F(z,u)=\frac{1}{2\pi i}\oint_{\mathcal{C}}F(z,u)\frac{dz}{z^{n+1}},
  \end{equation*}
  where $\mathcal{C}$ is a circle around 0 with radius less than $1$. Let $r>0$
  and perform the change of variables $z = e^{-t} = e^{-(r+i\tau)}$, so that
  the integral becomes
  \begin{equation}\label{coef1}
    [z^n]F(z,u)=\frac{1}{2\pi}\int_{-\pi}^{\pi}\fexp{nr+f(r+i\tau,u)+in\tau}d\tau.
  \end{equation}
  Now we choose $r=r(n,u)>0$ to be the unique positive solution of the
  saddle-point equation
  \begin{equation}\label{eqr}
    n=-f_t(r,u).
  \end{equation}
  Let $c$ be a constant such that $(m-1)/3 < c < (m-1)/2$, we choose
  specifically $c = 2(m-1)/5$. Consider first the integral
  \begin{equation*}
    I_0=\frac{1}{2\pi}\int_{-r(\log 1/r)^{-c}}^{r(\log 1/r)^{-c}}
    \fexp{nr+f(r+i\tau,u)+in\tau}d\tau.
  \end{equation*}
  For $\abs{\tau}\leq r(\log 1/r)^{-c}$, using Taylor expansion and Lemma
  \ref{lem:asy_central}, we have
  \begin{align*}
    f(r+i\tau,u)
    & =f(r,u)+i f_{t}(r,u) \tau -f_{tt}(r,u)\frac{\tau^2}{2}
    +\fOh[Big]{\abs{\tau}^3\sup_{\abs{y} \leq \tau} \abs{f_{ttt}(r+iy,u)}}\\
    & =f(r,u)+if_{t}(r,u) \tau -f_{tt}(r,u)\frac{\tau^2}{2}
    +\fOh{(\log 1/r)^{m-1-3c}}.
  \end{align*}
  Therefore, by the definition of $r$ in \eqref{eqr}, we have 
  \begin{equation*}
    I_0=\frac{e^{nr+f(r,u)}}{2\pi}\int_{-r(\log 1/r)^{-c}}^{r(\log 1/r)^{-c}}
    \fexp{-f_{tt}(r,u)\frac{\tau^2}{2}}d\tau\, \big(
    1+\fOh{(\log 1/r)^{m-1-3c}}\big).
  \end{equation*}
  Furthermore,
  \begin{align*}
    &\int_{-r(\log 1/r)^{-c}}^{r(\log 1/r)^{-c}}
    \fexp{-f_{tt}(r,u)\frac{\tau^2}{2}}d\tau \\
    &\qquad= \int_{-\infty}^{\infty}\fexp{-f_{tt}(r,u)\frac{\tau^2}{2}}d\tau
    -2\int_{r(\log 1/r)^{-c}}^{\infty}\fexp{-f_{tt}(r,u)\frac{\tau^2}{2}}d\tau\\
    &\qquad= \sqrt{\frac{2\pi}{f_{tt}(r,u)}}
    -2\int_{r(\log 1/r)^{-c}}^{\infty}\fexp{-f_{tt}(r,u)\frac{\tau^2}{2}}d\tau,
  \end{align*}
  and 
  \begin{align*}
    0 \leq \int_{r(\log 1/r)^{-c}}^{\infty}\fexp{-f_{tt}(r,u)\frac{\tau^2}{2}}d\tau
    & \leq \int_{r(\log 1/r)^{-c}}^{\infty}
    \fexp{-\frac{\tau}{2}f_{tt}(r,u)r(\log 1/r)^{-c}}d\tau \\
    & = \frac{2\fexp{-f_{tt}(r,u)r^2(\log 1/r)^{-2c}/2}}{
      f_{tt}(r,u)r(\log 1/r)^{-c}}\\
    & = \fOh{r (\log 1/r)^{-(m-1-c)} e^{-\gamma (\log 1/r)^{m-1-2c}} }
  \end{align*}
  for a constant $\gamma > 0$. Since $m-1-2c = (m-1)/5>0$, the $\Oh$-term goes to   zero faster than any power of $\log 1/r$. Hence we have
  \begin{equation}
    I_0=\frac{e^{nr+f(r,u)}}{\sqrt{2\pi f_{tt}(r,u)}}
    \big(1+\fOh[big]{(\log 1/r)^{m-1-3c}}\big)
    =\frac{e^{nr+f(r,u)}}{\sqrt{2\pi f_{tt}(r,u)}}
    \big(1+\fOh[big]{(\log n)^{-(m-1)/5}}\big).
  \end{equation}

  It remains to show that the rest of the integral in \eqref{coef1} is small
  compared to $I_0$. To this end, note for comparison that $1/\sqrt{2\pi
    f_{tt}(r,u)}$ is of order $r (\log 1/r)^{-(m-1)/2}$. Now consider
  \begin{equation*}
    I_1=\int_{r(\log 1/r)^{-c}}^{\pi}\fexp{nr+f(r+i\tau,u)+in\tau}\,d\tau.
  \end{equation*}
  Then
  \begin{align*}
    \abs{ I_1} & \leq e^{nr+f(r,u)} \int_{r(\log 1/r)^{-c}}^{\pi}
    \fexp{\f{\Re}{f(r+i\tau,u)-f(r,u)}}\,d\tau \\
    &= e^{nr+f(r,u)} \int_{r(\log 1/r)^{-c}}^{\pi}
    \frac{\abs{F(e^{-(r+i\tau)},u)}}{F(e^{-r},u)}\,d\tau.
  \end{align*}

  If $m\geq 3$, then we can use Lemma~\ref{lem:tail1} and parts (a) and (b) of
  Lemma~\ref{lem:tail2} to show that the integrand $\abs{F(e^{-(r+i\tau)},u)}/F(e^{-r},u)$ on the right hand side is
  $\fOh{\fexp{-CA_1/(2\pi)^2\cdot (\log 1/r)^{m-1-2c}}}$ for $\abs{\tau} \leq
  \pi r$ and $\fOh{\fexp{-CA_2 (\log 1/r)^{m-1}}}$ otherwise, which
  immediately shows that
  \begin{equation*}
    \abs{I_1} = \fOh{e^{nr+f(r,u)} \Bigl( r \fexp{- CA_1/(2\pi)^2\cdot (\log 1/r)^{m-1-2c}} + \fexp{-CA_2 (\log 1/r)^{m-1}} \Bigr) }.
  \end{equation*}
  For $m=2$, we need to be more careful. Again, part (a) of
  Lemma~\ref{lem:tail2} can be used for the interval where $\abs{\tau} \leq \pi r$,
  with the same bound as above. The rest of the integral is split again: we
  choose a constant $K > 0$ such that $CK > 1$ ($C$ as in
  Lemma~\ref{lem:tail1}), and $\delta > 0$ such that $\delta < CA_2$ ($A_2$ as
  in Lemma~\ref{lem:tail2}).
  
  If $y = -\tau/(2\pi)$ is not in the exceptional set $E(K,r)$ as defined in
  Lemma~\ref{lem:tail2}, then we have
  \begin{equation*}
    \frac{\abs{F(e^{-(r+i\tau)},u)}}{F(e^{-r},u)} 
    = \fOh{\fexp{-CK \log 1/r}} 
    = \fOh{r^{CK} }.
  \end{equation*}
  By part (c) of Lemma~\ref{lem:tail2}, the set of $\tau$-values for which this
  estimate does not hold has Lebesgue measure $\fOh{r^{1-\delta}}$, and we have
  the estimate
  \begin{equation*}
    \frac{\abs{F(e^{-(r+i\tau)},u)}}{F(e^{-r},u)} 
    = \fOh{\fexp{-CA_2 \log 1/r}} 
    = \fOh{r^{CA_2} }
  \end{equation*}
  for all those $\tau$. Putting everything together shows that
  \begin{equation*}
    \abs{I_1} = \fOh[big]{e^{nr + f(r,u)} 
      \big( r \fexp[big]{- CA_1 (\log 1/r)^{1/5} }
        + r^{CK} + r^{CA_2+1-\delta} \big) },
  \end{equation*}
  which again means that $I_1$ is negligible, since the exponents $CK$ and
  $CA_2+1-\delta$ are both $> 1$. The same reasoning can of course be applied
  to
  \begin{equation*}
    I_2=\int_{-\pi}^{-r(\log 1/r)^{-c}}\fexp{nr+f(r+i\tau,u)+in\tau}\,d\tau.
  \end{equation*}
  This finishes the proof for the function $F(z,u)$. The proof for $G(z,u)$ and
  $H_b(z,u)$ is analogous.
\end{proof}

\section{The Number of Representations}

It is straightforward now to prove our main results.

\begin{proof}[Proof of Theorems~\ref{thm:asy-general} and~\ref{thm:asy-2}]
  We specialize by $u=1$ in Lemma~\ref{lem:saddle}, which gives us
  \begin{equation*}
    P(n) = [z^n] F(z,1) = \frac{1}{\sqrt{2\pi f_{tt}(r_0,1)}} e^{nr_0+f(r_0,1)}
    \big( 1 + \fOh[big]{(\log n)^{-(m-1)/5}} \big),
  \end{equation*}
  where $r_0$ is given by the saddle-point equation $n = -f_t(r_0,1)$ (as its unique
  positive solution). Making use of Lemma~\ref{lem:asy_central}, we get
  \begin{equation*}
    n = \frac{f_m(1)}{(m-1)!r_0} (\log 1/r_0)^{m-1} + \fOh{(\log 1/r_0)^{m-2}},
  \end{equation*}
  which readily gives us
  \begin{equation}\label{eq:r-asmyp}
    \log 1/r_0 = \log n - (m-1) \log \log n 
    - \log \frac{f_m(1)}{(m-1)!} + \fOh{\frac{\log \log n}{\log n} }
  \end{equation}
  for $n\to\infty$. Now it follows that
  \begin{equation*}
    nr_0 = \frac{f_m(1)}{(m-1)!} (\log n)^{m-1} 
    \left(1 + \fOh{\frac{\log \log n}{\log n} } \right),
  \end{equation*}
  and Lemma~\ref{lem:asy_central} also yields
  \begin{align*}
    f(r_0,1) &= \frac{f_m(1)}{m!} (\log 1/r_0)^m 
    + \frac{f_{m-1}(1)}{(m-1)!} (\log 1/r_0)^{m-1} + \fOh{(\log n)^{m-2} } \\
    &= \frac{f_m(1)}{m!} (\log n)^m 
    \left( 1 - \frac{m(m-1)}{\log n} \log\log n 
      - \frac{m}{\log n} \log \frac{f_m(1)}{(m-1)!} 
      + \fOh{\frac{\log \log n}{(\log n)^2} } \right) \\
    &\phantom{=}\; + \frac{f_{m-1}(1)}{(m-1)!} (\log n)^{m-1}
    \left( 1 + \fOh{\frac{\log\log n}{\log n}} \right)
    + \fOh{(\log n)^{m-2} \log \log n }.
  \end{align*}
  Since $f_m(1)/m! = \kappa$ and $f_{m-1}(1)/(m-1)! = \kappa m (\sum_{j=1}^m
  \log p_j - \log d)/2$, this readily proves Theorem~\ref{thm:asy-general}
  (note that the factor $f_{tt}(r_0,1)$ only contributes $\fOh{\log n}$ to $\log
  P(n)$).

  To get the more precise formula (Theorem~\ref{thm:asy-2}) in the case $m=2$,
  we only need to expand a little further.
\end{proof}

In principle, it would be possible to obtain similar (as in
Theorem~\ref{thm:asy-2}), more precise asymptotic formulas (in terms of $\log
n$ and $\log \log n$) for all $m \geq 2$, but the expressions become very
lengthy.


\section{Sum of Digits, Hamming Weight, Occurrences of a Digit}
\label{sec:parameters}


This section is devoted to the central limit theorems for the sum
of digits (Theorem~\ref{thm:sum_of_digits}), the Hamming weight
(Theorem~\ref{thm:hamming}) and the occurrence of a fixed digit
(Theorem~\ref{thm:digits}). We will only present the proof for the sum of digits; the other two proofs being analogous. The weak convergence to a Gaussian distribution will follow from the following general theorem (see \cite[Theorem IX.13]{Flajolet-Sedgewick:2009:analy} and the comment thereafter, which states that it is sufficient to consider real values of $u$):

\begin{lem}[cf. {\cite[Theorem IX.13]{Flajolet-Sedgewick:2009:analy}}]\label{thm:quasi-power}
Let $X_1,X_2,\ldots$ be a sequence of discrete random variables that only take on non-negative integer values. Assume that, for $u$ in a fixed interval $\Omega$ around $1$, the probability generating function $P_n(u)$ of $X_n$ satisfies an asymptotic formula of the form
\begin{equation*}
  P_n(u) = \exp(R_n(u)) (1+ o(1))
\end{equation*}
uniformly with respect to $u$, where each $R_n(u)$ is analytic in $\Omega$. Assume also that the conditions
\begin{equation*}
  R_n'(1) + R_n''(1) \to \infty\qquad \text{and} \qquad
  \frac{R'''(u)}{(R_n'(1) + R_n''(1))^{3/2}} \to 0
\end{equation*}
hold uniformly in $u$. Then the normalised random variables
\begin{equation*}
  X_n^* = \frac{X_n - R_n'(1)}{(R_n'(1) + R_n''(1))^{1/2}}
\end{equation*}
converge in distribution to a standard Gaussian distribution.
\end{lem}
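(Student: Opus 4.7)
The plan is to apply Lévy's continuity theorem: it suffices to show that the characteristic function of $X_n^*$ converges pointwise to $e^{-t^2/2}$, the characteristic function of a standard Gaussian. Writing $\mu_n = R_n'(1)$ and $\sigma_n^2 = R_n'(1) + R_n''(1)$, and recalling that the probability generating function is $P_n(u) = \mathbb{E}[u^{X_n}]$, we have $\mathbb{E}[e^{it X_n^*}] = e^{-it\mu_n/\sigma_n} P_n(e^{it/\sigma_n})$. For each fixed $t$ and sufficiently large $n$, the point $e^{it/\sigma_n}$ lies in a small complex neighborhood of $1$, so after first extending $R_n$ analytically to a complex neighborhood of $1$ (using a Cauchy integral on a small circle inside $\Omega$), the quasi-power estimate $P_n(u) = \exp(R_n(u))(1 + o(1))$ applies there.

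Next I would introduce $L_n(s) = R_n(e^s)$ and Taylor expand around $s = 0$. The chain rule gives $L_n(0) = R_n(1)$, $L_n'(0) = R_n'(1) = \mu_n$, and $L_n''(0) = R_n'(1) + R_n''(1) = \sigma_n^2$, while $L_n'''(s)$ is a fixed polynomial expression in $e^s$ and $R_n'(e^s)$, $R_n''(e^s)$, $R_n'''(e^s)$. Since $P_n(1) = 1$ together with the quasi-power hypothesis forces $R_n(1) = o(1)$, Taylor's theorem with remainder yields $L_n(s) = \mu_n s + \frac{1}{2}\sigma_n^2 s^2 + \frac{1}{6} s^3 L_n'''(\xi) + o(1)$ for some $\xi$ between $0$ and $s$. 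The assumption $R_n'''(u)/\sigma_n^3 \to 0$ uniformly in $u \in \Omega$, combined with the fact that $R_n'(1)$ and $R_n''(1)$ are both $O(\sigma_n^2)$, then ensures $L_n'''(\xi)/\sigma_n^3 \to 0$ uniformly on a fixed complex neighborhood of $s = 0$.

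Substituting $s = it/\sigma_n$ in the expansion, for each fixed compact set of $t$ one obtains $L_n(it/\sigma_n) = it\mu_n/\sigma_n - t^2/2 + o(1)$, and consequently $\mathbb{E}[e^{it X_n^*}] = \exp(-t^2/2 + o(1))(1 + o(1)) \to e^{-t^2/2}$. By Lévy's continuity theorem this is precisely convergence in distribution of $X_n^*$ to the standard Gaussian, which is the claim.

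The main technical obstacle is transferring the real-variable hypotheses to the complex neighborhood that the characteristic-function argument requires: one must verify that $R_n(u)$ extends analytically to a complex neighborhood of $1$ inside $\Omega$ and that the uniform control of $R_n'''(u)$ persists on that complex neighborhood. A Cauchy integral on a circle strictly inside $\Omega$ handles this, which is the reason why it is legitimate to state the hypotheses only for real $u$. A secondary subtlety is to ensure that the two distinct $o(1)$ error sources---the multiplicative $(1+o(1))$ in the quasi-power expansion and the Taylor remainder---are both uniform in $t$ on compact intervals, so that the limit of the characteristic functions is uniform enough to conclude via Lévy.
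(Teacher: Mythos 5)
The first thing to note is that the paper does not prove this lemma at all: it is quoted from Flajolet and Sedgewick (Theorem~IX.13) together with the comment made there that it suffices to verify the hypotheses for \emph{real} $u$ --- and that comment is exactly the point where your argument has a genuine gap. You argue via characteristic functions and L\'evy's continuity theorem, which forces you to evaluate $P_n$ at the non-real points $u=e^{it/\sigma_n}$, whereas the hypotheses --- the uniform estimate $P_n(u)=\exp(R_n(u))(1+o(1))$ and the uniform bound on $R_n'''$ --- are only given on the real interval $\Omega$. Your proposed remedy, a ``Cauchy integral on a small circle inside $\Omega$'', cannot work: $\Omega$ is a real interval, so it contains no circle, and Cauchy's formula recovers values at a point from values on a contour \emph{surrounding} it, so an $o(1)$ bound for $\log P_n(u)-R_n(u)$ on a real segment gives no control whatsoever at nearby non-real points. (Each $R_n$, being real-analytic, does extend holomorphically, but possibly only to an $n$-dependent neighbourhood, and neither the $(1+o(1))$ factor nor the third-derivative bound is assumed, or can be deduced, off the real axis.) The way to prove the lemma from real-variable hypotheses --- and the reason the statement can legitimately be phrased for real $u$ --- is to run essentially your Taylor computation for the moment generating function $\mathbb{E}\bigl[e^{\theta X_n^*}\bigr]$ with \emph{real} $\theta$, so that $u=e^{\theta/\sigma_n}\in\Omega$ for large $n$, show that it converges pointwise to $e^{\theta^2/2}$ on a real neighbourhood of $0$, and then conclude by Curtiss' continuity theorem for moment generating functions instead of L\'evy's theorem for characteristic functions.

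There is also a secondary gap in the remainder estimate: you invoke ``the fact that $R_n'(1)$ and $R_n''(1)$ are both $O(\sigma_n^2)$''. That is not a hypothesis; only the sum $R_n'(1)+R_n''(1)=\sigma_n^2$ is controlled, and nothing you state rules out that the two terms are individually of much larger order and cancel, in which case $L_n'''(s)=e^{3s}R_n'''(e^s)+3e^{2s}R_n''(e^s)+e^{s}R_n'(e^s)$ need not be $o(\sigma_n^3)$; the same difficulty reappears if one instead Taylor-expands $R_n(u)$ around $u=1$, through the cubic contributions carrying $R_n'(1)$ and $R_n''(1)$. To close this one needs an extra input beyond the stated derivative conditions, for instance that $P_n$ is a probability generating function of a non-negative variable, so that $\theta\mapsto\log P_n(e^{\theta})$ is nondecreasing and convex and lies within $o(1)$ of $R_n(e^{\theta})$ on a fixed interval; this can be used to exclude $|R_n''(1)|$ (and hence $R_n'(1)$) being of larger order than $\sigma_n^2+\sup_{\Omega}|R_n'''|=o(\sigma_n^3)$. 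As written, this step is asserted rather than proved.
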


\begin{proof}[Proof of Theorem~\ref{thm:sum_of_digits}]
We use Lemma~\ref{lem:saddle}. Let $X_n$ be the sum of digits of a random multi-base representation of $n$, and let
\begin{equation*}
  P_n(u) = \frac{[z^n] F(z,u)}{[z^n] F(z,1)}
\end{equation*}
be the associated probability generating function. In the following, we write
$r(u)$ instead of just $r$ to emphasize the dependence on $u$ (of course, $r$
depends on $n$ as well). Moreover, we set $r_0 = r(1)$ as in the previous
section. In view of Lemma~\ref{lem:saddle}, Lemma~\ref{thm:quasi-power} applies
with
\begin{equation*}
  R_n(u) = n (r(u)-r_0) + f(r(u),u) - f(r_0,1) - \frac12 \log f_{tt}(r(u),u)
  + \frac12 \log f_{tt}(r_0,1).
\end{equation*}
We only have to confirm the conditions on the asymptotic behaviour of the
derivatives. It is easy to extend the argument of Lemma~\ref{lem:asy_central}
to obtain
\begin{equation}\label{eq:partial-derivatives}
\frac{\partial^j}{\partial t^j} \frac{\partial^k}{\partial u^k} f(t,u) =
 \begin{cases}
\frac{\partial^k}{\partial u^k} \frac{f_m(u)}{m!} (\log 1/t)^m
+ \fOh{ (\log 1/t)^{m-1}}, & j = 0, \\
(-1)^j (j-1)! \frac{\partial^k}{\partial u^k} \frac{f_m(u)}{(m-1)!}
\frac{(\log 1/t)^{m-1}}{t^j}
+ \fOh{t^{-j} (\log 1/t)^{m-2}}, & j \neq 0,
\end{cases}
\end{equation}
as $t \to 0^+$, uniformly in $u$.  The definition of $r$ by the implicit
equation $n = - f_t(r(u),u)$ allows us to express $r'(u)$ and all higher
derivatives in terms of derivatives of $f$ by means of implicit
differentiation: we have $r'(u) = -f_{tu}(r(u),u)/f_{tt}(r(u),u)$, and so
forth. Thus it is possible to express the derivatives of $R_n$ only in terms of
$f(r(u),u)$ and its partial derivatives, for which we have the aforementioned
asymptotic formula~\eqref{eq:partial-derivatives}.  Putting everything
together, one obtains
\begin{equation*}
  \frac{\partial^k}{\partial u^k} R_n(u)
  = \frac{1}{m!} \left( \frac{\partial^k}{\partial u^k} f_m(u) \right)
  \Bigl( \log \frac{1}{r(u)} \Bigr)^m
  + \fOh{\Bigl( \log \frac{1}{r(u)} \Bigr)^{m-1}}
\end{equation*}
for $k \in \{1,2,3\}$, so (making use of~\eqref{eq:r-asmyp})
\begin{equation*}
  R_n'(1) \sim \frac{f_m'(1)}{m!} (\log 1/r_0)^m
  \sim  \frac{f_m'(1)}{m!} (\log n)^m
  = \frac{d-1}{2m!} \cdot \prod_{j=1}^m \frac{1}{\log p_j} (\log n)^m
\end{equation*}
and likewise
\begin{equation*}
  R_n''(1) \sim \frac{f_m''(1)}{m!} (\log 1/r_0)^m
  \sim  \frac{f_m''(1)}{m!} (\log n)^m
  = \frac{(d-1)(d-5)}{12m!} \cdot \prod_{j=1}^m \frac{1}{\log p_j} (\log n)^m
\end{equation*}
and $R_n'''(u) = \fOh{(\log n)^m}$ uniformly in $u$. Thus the conditions of Lemma~\ref{thm:quasi-power} are satisfied, which proves asymptotic normality of the distribution. However, we still need to verify the asymptotic behaviour of the moments (which is not implied by weak convergence). To this end, we apply the saddle point method once again.

The generating function of the total sum of digits is $F_u(z,1) = \frac{\partial}{\partial u} F(z,u) \big|_{u=1}$, and the mean is given by
\begin{equation*}
  \mu_n = \frac{[z^n] F_u(z,1)}{[z^n]F(z,1)},
\end{equation*}
so we have to determine an asymptotic formula for the coefficients of
$F_u(z,1)$. Cauchy's integral formula,
  \begin{equation*}
    [z^n]F_u(z,1)=\frac{1}{2\pi i}\oint_{\mathcal{C}}F_u(z,1)\frac{dz}{z^{n+1}},
  \end{equation*}
and the change of variables $z = e^{-t} = e^{-(r_0+i\tau)}$ (where $r_0$ satisfies the saddle point equation as before) yields
\begin{equation*}
  [z^n]F_u(z,1)=\frac{1}{2\pi}\int_{-\pi}^{\pi}
  \fexp{nr_0+f(r_0+i\tau,1)+in\tau}f_u(r_0+i\tau,1)d\tau.
\end{equation*}
Thus,
\begin{multline*}
  [z^n]F_u(z,1) - f_u(r_0,1)[z^n]F(z,1) \\
  = \frac{1}{2\pi}\int_{-\pi}^{\pi}\fexp{nr_0+f(r_0+i\tau,1)+in\tau}
  (f_u(r_0+i\tau,1)-f_u(r_0,1))dt.
\end{multline*}
As we have seen in the proof of Lemma~\ref{lem:saddle}, the tails (the parts of
the integral where $\abs{\tau} \geq r(\log 1/r)^{-c}$) are negligible in that they
only contribute an error term that goes faster to $0$ than any power of $\log
1/r$.
So we may focus on the central part, where we expand into a power series
\begin{multline*}
\fexp{nr_0+f(r_0+i\tau,1)+in\tau}(f_u(r_0+i\tau,1)-f_u(r_0,1)) = e^{nr_0 + f(r_0,1) - f_{tt}(r_0,1) \tau^2/2} \\
\times \Bigl( i f_{tu}(r_0,1)\tau - \frac{f_{ttu}(r_0,1)}{2} \tau^2 - i \frac{f_{tttu}(r_0,1)}{6} \tau^3 + \frac{4f_{ttt}(r_0,1)f_{tu}(r_0,1)+f_{ttttu}(r_0,1)}{24} \tau^4 + \cdots \Bigr).
\end{multline*}
We continue in the same way as in the proof of Lemma~\ref{lem:saddle} to
evaluate the integral over the central region asymptotically by making use of
the asymptotic formula \eqref{eq:partial-derivatives}. This eventually gives us
\begin{equation*}
  \mu_n = \frac{[z^n]F_u(z,1)}{[z^n]F(z,1)}
  = f_u(r_0,1) + \frac{f_{tu}(r_0,1)f_{ttt}(r_0,1) 
    - f_{tt}(r_0,1)f_{ttu}(r_0,1)}{f_{tt}(r_0,1)^2}
  + \fOh{(\log 1/r_0)^{-(m-1)}}.
\end{equation*}
Thus in particular
\begin{align*}
\mu_n &= f_u(r_0,1) + \fOh{1} = \frac{f_m'(1)}{m!} (\log 1/r_0)^m + \fOh{ (\log 1/r_0)^{m-1}} \\
&= \frac{\kappa(d-1)}{2 \log d} (\log n)^{m}
    +\fOh{(\log n)^{m-1}\log\log n}.
\end{align*}
We repeat the process with $F_{uu}(z,u) + F_u(z,u)$ in the place of $F_u(z,u)$
to obtain an asymptotic formula for the second moment, which in turn yields
formula
\begin{align*}
\sigma^2_n &= \frac{[z^n](F_{uu}(z,1)+F_u(z,1))}{[z^n]F(z,1)} - \mu_n^2 = 
f_{uu}(r_0,1) + f_u(r_0,1) + \fOh{(\log 1/r_0)^{m-1}} \\
&=  \frac{f_m'(1)+f_m''(1)}{m!} (\log 1/r_0)^m + \fOh{ (\log 1/r_0)^{m-1}} \\
&= \frac{\kappa(d-1)(d+1)}{12 \log d} (\log n)^{m}
    +\fOh{(\log n)^{m-1}\log\log n}.
\end{align*}
for the variance. This completes our proof.
\end{proof}

\section{Conclusion}
\label{sec:conclusion}

We obtained an asymptotic formula for the number of representations of an
integer $n$ in a multi-base system with given bases $p_1$, $p_2$, \ldots, $p_m$, which are equivalent to partitions into elements of the set
\begin{equation*}
  \Se=\lbrace p_1^{\alpha_1}p_2^{\alpha_2}\dots p_m^{\alpha_m} :
  \alpha_i \in \mathbb{N}\cup \{0\} \rbrace.
\end{equation*}
Moreover, we proved central limit theorems for three very natural parameters: the sum of digits (corresponding to the length of a partition), the Hamming weight (corresponding to the number of distinct parts of a partition), and the number of occurrences of a given digit. There are many more parameters that could be studied; to give one further example, the probablilty that the digit associated with a given element $s \in \Se$ in a random multi-base representation of $n$ is equal to $b$ for some $b \in \{0,1,\ldots,d-1\}$ is $1/d$ in the limit as $n \to \infty$, as one would heuristically expect. It is not difficult to adapt our saddle point approach to this problem, the generating function being
\begin{equation*}
  z^{bs} \prod_{\substack{h \in \Se \\ h \neq s}} \frac{1-z^{hd}}{1-z^h}
\end{equation*}
in this case. As it was already mentioned in Section~\ref{sec:existing}, it would also be possible to extend our results to other digit sets.


\renewcommand{\MR}[1]{}
\bibliographystyle{amsplainurl}
\bibliography{multi-bases-asy-full}

\providecommand{\Submitted}{Submitted} \providecommand{\availableat}{ available
  at } \providecommand{\alsoavailableat}{ also available at }
  \providecommand{\evavailableat}{earlier version available at }
  \providecommand{\toappearin}{To appear in } \providecommand{\toappear}{to
  appear} \providecommand{\inpreparation}{in preparation}
  \providecommand{\doi}[1]{\href{http://dx.doi.org/#1}{\path{doi:#1}}}
  \providecommand{\etc}{\emph{etc.}}\def\cprime{$'$}
\providecommand{\bysame}{\leavevmode\hbox to3em{\hrulefill}\thinspace}
\providecommand{\MR}{\relax\ifhmode\unskip\space\fi MR }
\providecommand{\MRhref}[2]{%
  \href{http://www.ams.org/mathscinet-getitem?mr=#1}{#2}
}
\providecommand{\href}[2]{#2}
\begin{thebibliography}{10}

\bibitem{Avanzi-Dimitrov-Doche-Sica:2006:double-base}
Roberto Avanzi, Vassil Dimitrov, Christophe Doche, and Francesco Sica,
  \href{http://dx.doi.org/10.1007/11935230_9}{\emph{Extending scalar
  multiplication using double bases}}, Advances in Cryptology---{ASIACRYPT}
  2006, Lecture Notes in Comput. Sci., vol. 4284, Springer, Berlin, 2006,
  pp.~130--144. \MR{2444632 (2009h:11092)}

\bibitem{Berthe-Imbert:2009}
Val{\'e}rie Berth{\'e} and Laurent Imbert, \emph{{D}iophantine approximation,
  {O}strowski numeration and the double-base number system}, Discrete
  Mathematics and Theoretical Computer Science \textbf{11:1} (2009), 153--172.

\bibitem{Blake-Seroussi-Smart:1999}
Ian~F. Blake, Gadiel Seroussi, and Nigel~P. Smart, \emph{Elliptic curves in
  cryptography}, London Mathematical Society Lecture Note Series, vol. 265,
  Cambridge University Press, 1999.

\bibitem{Cohen:2007:number-theory}
Henri Cohen, \emph{Number theory. vol. {II}. analytic and modern tools},
  Graduate Texts in Mathematics, vol. 240, Springer, New York, 2007.
  \MR{2312338 (2008e:11002)}

\bibitem{Dimitrov-Imbert-Mishra:2008:double-base}
Vassil Dimitrov, Laurent Imbert, and Pradeep~K. Mishra,
  \href{http://dx.doi.org/10.1090/S0025-5718-07-02048-0}{\emph{The double-base
  number system and its application to elliptic curve cryptography}}, Math.
  Comp. \textbf{77} (2008), no.~262, 1075--1104. \MR{2373193 (2009c:94044)}

\bibitem{Dimitrov-Jullien-Miller:1999:double-base}
Vassil~S. Dimitrov, Graham~A. Jullien, and William~C. Miller, \emph{Theory and
  applications of the double-base number system}, IEEE Transactions on
  Computers \textbf{48} (1999), 1098--1106.

\bibitem{Flajolet-Gourdon-Dumas:1995:mellin}
Philippe Flajolet, Xavier Gourdon, and Philippe Dumas,
  \href{http://dx.doi.org/10.1016/0304-3975(95)00002-E}{\emph{Mellin transforms
  and asymptotics: {H}armonic sums}}, Theoret. Comput. Sci. \textbf{144}
  (1995), 3--58. \MR{96h:68093}

\bibitem{Flajolet-Sedgewick:2009:analy}
Philippe Flajolet and Robert Sedgewick,
  \href{http://dx.doi.org/10.1017/CBO9780511801655}{\emph{Analytic
  combinatorics}}, Cambridge University Press, Cambridge, 2009.

\bibitem{Gordon:1998}
Daniel~M. Gordon, \emph{A survey of fast exponentiation methods}, J. Algorithms
  \textbf{27} (1998), 129--146. \MR{99g:94014}

\bibitem{Grabner-Heuberger:2006:Number-Optimal}
Peter~J. Grabner and Clemens Heuberger,
  \href{http://dx.doi.org/10.1007/s10623-005-6158-y}{\emph{On the number of
  optimal base 2 representations of integers}}, Des. Codes Cryptogr.
  \textbf{40} (2006), no.~1, 25--39. \MR{2226281 (2007f:11008)}

\bibitem{Heuberger-Krenn:2013:wnaf-analysis}
Clemens Heuberger and Daniel Krenn,
  \href{http://dx.doi.org/10.1016/j.jnt.2012.08.029}{\emph{Analysis of
  width-$w$ non-adjacent forms to imaginary quadratic bases}}, J.~Number Theory
  \textbf{133} (2013), no.~5, 1752--1808. \MR{3007130}

\bibitem{Heuberger-Krenn:2013:wnafs-optimality}
\bysame, \href{http://dx.doi.org/10.5802/jtnb.840}{\emph{Optimality of the
  width-$w$ non-adjacent form: General characterisation and the case of
  imaginary quadratic bases}}, J.~Th\'eor. Nombres Bordeaux \textbf{25} (2013),
  no.~2, 353--386. \MR{3228312}

\bibitem{Hwang:2001:limit_theorems_number}
Hsien-Kuei Hwang, \emph{Limit theorems for the number of summands in integer
  partitions}, J. Combin. Theory Ser. A \textbf{96} (2001), no.~1, 89--126.
  \MR{MR1855788 (2002f:11138)}

\bibitem{Knuth:1998:Art:2}
Donald~E. Knuth, \emph{Seminumerical algorithms}, third ed., The Art of
  Computer Programming, vol.~2, Addison-Wesley, 1998.

\bibitem{Krenn-Ralaivaosaona-Wagner:2014:multi-base-asy}
Daniel Krenn, Dimbinaina Ralaivaosaona, and Stephan Wagner,
  \href{http://hal.inria.fr/hal-01077251}{\emph{On the number of multi-base
  representations of an integer}}, 25th International Conference on
  Probabilistic, Combinatorial, and Asymptotic Methods for the Analysis of
  Algorithms (AofA'14), DMTCS-HAL Proceedings, vol.~BA, 2014, pp.~229--240.

\bibitem{Krenn-Thuswaldner-Ziegler:2013:belcher}
Daniel Krenn, J\"org Thuswaldner, and Volker Ziegler,
  \href{http://dx.doi.org/10.1007/s00605-012-0443-4}{\emph{On linear
  combinations of units with bounded coefficients and double-base digit
  expansions}}, Monatsh. Math. \textbf{171} (2013), no.~3--4, 377--394.

\bibitem{Mahler:1940:spec-functional-eq}
Kurt Mahler, \emph{On a special functional equation}, J. London Math. Soc.
  \textbf{15} (1940), 115--123. \MR{0002921 (2,133e)}

\bibitem{Mishra-Dimitrov:2008:comb-double-base}
Pradeep~Kumar Mishra and Vassil Dimitrov,
  \href{http://dx.doi.org/10.3934/amc.2008.2.159}{\emph{A combinatorial
  interpretation of double base number system and some consequences}}, Adv.
  Math. Commun. \textbf{2} (2008), no.~2, 159--173. \MR{2403045 (2009d:94097)}

\bibitem{Miyaji-Ono-Cohen:1997:effic}
Atsuko Miyaji, Takatoshi Ono, and Henri Cohen, \emph{Efficient elliptic curve
  exponentiation}, Information and communications security. 1st international
  conference, ICICS '97, Beijing, China, November 11--14, 1997. Proceedings
  (Yongfei Han, Tatsuaki Okamoto, and Sihan Qing, eds.), Lecture Notes in
  Comput. Sci., vol. 1334, Springer-Verlag, 1997, pp.~282--290.

\bibitem{Morain-Olivos:1990}
Fran\c{c}ois Morain and Jorge Olivos, \emph{Speeding up the computations on an
  elliptic curve using addition-subtraction chains}, RAIRO Inform. Th\'eor.
  Appl. \textbf{24} (1990), 531--543.

\bibitem{OEIS:2015}
\emph{The {O}n-{L}ine {E}ncyclopedia of {I}nteger {S}equences},
  \url{http://oeis.org}, 2015.

\bibitem{Pennington:1953:Mahler-part-prob}
William~Barry Pennington, \emph{On {M}ahler's partition problem}, Ann. of Math.
  (2) \textbf{57} (1953), 531--546. \MR{0053959 (14,846m)}

\bibitem{Protasov:2000:asy-partitions}
Vladimir~Yu. Protasov,
  \href{http://dx.doi.org/10.1070/SM2000v191n03ABEH000464}{\emph{Asymptotics of
  the partition function}}, Mat. Sb. \textbf{191} (2000), no.~3, 65--98.
  \MR{1773255 (2001h:11134)}

\bibitem{Protasov:2004:probl-asy-partitions}
\bysame, \href{http://dx.doi.org/10.1023/B:MATN.0000036752.47140.98}{\emph{On
  the problem of the asymptotics of the partition function}}, Mat. Zametki
  \textbf{76} (2004), no.~1, 151--156. \MR{2099853 (2006a:11132)}

\bibitem{Reitwiesner:1960}
George~W. Reitwiesner,
  \href{http://dx.doi.org/10.1016/S0065-2458(08)60610-5}{\emph{Binary
  arithmetic}}, Advances in Computers, {V}ol. 1, Academic Press, New York,
  1960, pp.~231--308. \MR{0122018 (22 \#12745)}

\bibitem{Reznick:1990:some}
Bruce Reznick, \href{http://dx.doi.org/10.1007/978-1-4612-3464-7_29}{\emph{Some
  binary partition functions}}, Analytic number theory ({A}llerton {P}ark,
  {IL}, 1989), Progr. Math., vol.~85, Birkh\"auser Boston, Boston, MA, 1990,
  pp.~451--477. \MR{1084197 (91k:11092)}

\bibitem{Solinas:2000:effic-koblit}
Jerome~A. Solinas,
  \href{http://dx.doi.org/10.1023/A:1008306223194}{\emph{Efficient arithmetic
  on {K}oblitz curves}}, Des. Codes Cryptogr. \textbf{19} (2000), 195--249.
  \MR{2002k:14039}

\end{thebibliography}


\end{document}